\newcommand\tab[1][1cm]{\hspace*{#1}}
\newtheorem{theo}{Theorem}[section]
\newtheorem{lemm}[theo]{Lemma}
\newtheorem{defi}[theo]{Definition}
\newtheorem{remark}[theo]{Remark}
\numberwithin{equation}{section}
\title{Global existence for an attraction-repulsion chemotaxis fluid model with logistic source}
\author[a]{Abelardo Duarte-Rodr\'{\i}guez}
\author[b]{Lucas C. F. Ferreira\thanks{Corresponding author.\\ \indent E-mail addresses: \href{mailto:abelneonmec@gmail.com}{abelneonmec@gmail.com} (A. Duarte-Rodr\'{\i}guez), \href{mailto:lcff@ime.unicamp.br}{lcff@ime.unicamp.br}  (L. C. F. Ferreira),\\ \href{mailto:jvillami@uis.edu.co}{jvillami@uis.edu.co}  (E. J. Villamizar-Roa).}}
\author[a]{\'Elder J. Villamizar-Roa}
\affil[a]{Universidad Industrial de Santander, Escuela de Matem\'{a}ticas, A.A. 678, Bucaramanga, Colombia.}
\affil[b]{Universidade Estadual de Campinas, Departamento de Matemática, CEP 13083-859, Campinas-SP, Brazil.}
\date{}
\begin{document}
\maketitle
\begin{abstract}
We consider an attraction-repulsion chemotaxis model coupled with the Navier-Stokes system. This model describes the interaction between a type of cells (e.g., bacteria), which proliferate following a logistic law, and two chemical signals produced by the cells themselves that degraded at a constant rate. Also, it is considered that the chemoattractant is consumed with a rate proportional to the amount of organisms. The cells and chemical substances are transported by a viscous incompressible fluid under the influence of a force due to the aggregation of cells. We prove the existence of global mild solutions in bounded domains of $\mathbb{R}^N,$ $N=2, 3,$ for small initial data in $L^p$-spaces.\vspace{0.3cm}

\noindent{\bf Keywords.} Chemotaxis, Keller-Segel-Navier-Stokes system, attraction-repulsion, logistic source, global mild solutions.\vspace{0.3cm}

\noindent{\bf AMS subject classifications.} 35K55; 35Q35; 35Q92; 92C17
\end{abstract}

\section{Introduction}
\hspace{0.4cm}Chemotaxis is the oriented movement of cells toward the concentration gradient of certain chemicals in their environment. One of the most interesting phenomena in chemotaxis is the aggregation of chemotactic cells and pattern formation. Chemotactic attraction refers to the movement of cells toward the increasing concentration of a signal, whereas chemotactic repulsion means that cells move along the decreasing concentration of a cue (see for instance \cite{horstmann2011generalizing,liu2013pattern,luca2003chemotactic,Quinlan2005decay} and references therein).  Interactions between cells and the chemical signal may cause several interesting biological patterns. More recent observations show that, in certain cases of chemotactic motion in liquid environments, the mutual interaction between cells and fluid may be substantial (see for instance \cite{chertock2012sinking,dombrowski2004self,hill2005bioconvection,tuval2005bacterial} and references therein). Also, it is important to consider the biological situation which the bacterial population may proliferate according to a logistic law \cite{bellomo2015toward,hillen2009user,hillen2013convergence}; in fact, in several applications, the respective biological setting requires to take into account the proliferation and death of cells, for example, bacterial pattern formation \cite{tyson1999model,woodward1995spatio} or endothelial cell movement and growth in response to a chemical substance known as tumour angiogenesis factor (TAF), which has a significant role in the process of cancer cell invasion of neighboring tissue \cite{chaplain2005mathematical,chaplain1993model,mantzaris2004mathematical}. Previous references lead us to consider the following attraction-repulsion chemotaxis model under the effect of an incompressible viscous fluid with logistic source
\begin{equation}\label{KNS}
\left\{
\begin{array}{lc}
n_{t}+u\cdot\nabla n=\Delta n-\chi \nabla\cdot( n\nabla c)+\xi \nabla\cdot( n\nabla v)+\varsigma n-\mu n^2, &\\
c_{t}+u\cdot\nabla c=\Delta c+\kappa_1(\alpha_1 n- \beta_1 c)-\kappa_2 \gamma cn,  &\\
v_{t}+u\cdot\nabla v=\Delta v+\alpha_2 n-\beta_2 v,  &\\
u_{t}+(u\cdot\nabla)u=\Delta u-\nabla\pi-n \nabla \phi, &\\
\nabla\cdot u=0, &
\end{array}
\right.
\end{equation}
in $\Omega\times(0,T),$ where $\Omega$ is a (sufficiently smooth) bounded domain of $\mathbb{R}^N,$ $N=2,3,$ and $0<T\leq\infty$ is an arbitrary existence time. Here $n = n(x, t),$ $c =c(x, t),$ $v = v(x, t),$ $\pi(x,t)$ and $u(x,t)$ denote respectively the cell density, the concentration of an attractive chemical signal, the concentration of a repulsive chemical signal, the hydrostatic pressure, and the velocity field of the fluid at position $x\in \Omega$ and time $t\in(0,T)$. The evolution of the velocity field $u(x,t)=\left[u_{1}(x,t),..,u_{N}(x,t)\right]$ is governed by the incompressible Navier-Stokes system. In $(\ref{KNS})_4$ the term $-n \nabla \phi$ is a force due to aggregation which arises from the density difference between the fluid with and without the presence of organisms. This term is obtained from an approximation in the fluid, similar to the Boussinesq one for convection problems \cite{chandrasekhar2013hydrodynamic}, which establishes that the effects due to density variations caused by cell aggregation appear only in the buoyancy forcing. The term $-\nabla\cdot(\chi n\nabla c)$ reflects the attractive movement of cells, whereas the term $\nabla\cdot(\xi n\nabla v)$ represents the repulsion migration. In the second equation of (\ref{KNS}), $\kappa_1, \kappa_2 \in \left\{0,1\right\}$ allow two kind of interactions: 1$)$ If $\kappa_1=1$ the attractive signal is produced by the cells  themselves and degrade at a constant rate. 2$)$ If $\kappa_2=1$ the chemical is consumed with a rate proportional to the amount of organisms. The parameters $\chi,$ $\xi,$ $\alpha_1,$ $\beta_1,$ $\alpha_2,$ $\beta_2$ and $\gamma$ are positive constants that represent the chemotactic behavior; with more details, $\chi$ and $\xi$ denote the chemotactic coefficients, $\alpha_1$ and $\alpha_2$ represent the chemical production rate, $\gamma$ represents the chemical consumed rate and, $\beta_1$ and $\beta_2$ denote the chemical degradation rates; finally, $\varsigma,$ and $\mu$ are non-negative constants which describe the organism growth rate and the carrying capacity (i.e. the maximum sustainable population), respectively.\\

System (\ref{KNS}) is completed with the following initial data and boundary conditions
\begin{equation}\label{initialdata}
\left\{
\begin{array}{lc}
\left[n(x,0),c(x,0),v(x,0),u(x,0)\right]=\left[n_0(x),c_0(x),v_0(x),u_0(x)\right],\ x\in\Omega,\\[.3cm]
\frac{\partial n(x,t)}{\partial \nu}=\frac{\partial c(x,t)}{\partial \nu}=\frac{\partial v(x,t)}{\partial \nu}=0, \quad u(x,t)=0,\quad x\in\partial\Omega,\quad t\in(0,T).
\end{array}
\right.
\end{equation}
System (\ref{KNS})-(\ref{initialdata}) consists of three extensions of the classical Keller-Segel model. The first one is the chemotaxis fluid model, which is known as a challenging model; the second extension corresponds to the attractive-repulsive chemotaxis framework, and the third one corresponds to the chemotaxis models with logistic source. In fact, from the mathematical point of view, system (\ref{KNS})-(\ref{initialdata}) seems not to have been studied yet. However, the issues of existence and long-time behavior of solutions related to the three aforementioned chemotaxis frameworks have attracted the attention of many authors, especially in the last five years; for instance, we mention some works \cite{braukhoff2016global,espejo2015reaction,jins2015boundedness,jin2015asymptotic,jin2016boundedness,kozono2016existence,lankeit2016long,li2016large,li2016boundedness,li2016attraction,lin2016boundedness,liu2012classical,wang2016boundedness,zhang2015attraction,zheng2016boundedness}, which will be briefly reviewed in the sequel. \\

$-$ {\it Attraction-repulsion chemotaxis model without fluid and logistic source}\newline
 System (\ref{KNS}) without fluid and logistic source ($u=0,\varsigma=\mu=\kappa_2=0,\kappa_1=1$) has been analyzed by several authors, see for instance \cite{jins2015boundedness,jin2015asymptotic,jin2016boundedness,lin2016boundedness,liu2012classical}. This model corresponds to a direct generalization of the classical Keller-Segel chemotaxis system; however, the analysis of the large time behavior of solutions is difficult due to the lack of a Lyapunov functional. For one-dimensional bounded domains, Zhi-An Wang \textsl{et al.} \cite{liu2012classical} proved the existence of global classical solutions based on Amann's theory and the method of energy estimates. For a more recent result in one dimension, see \cite{jin2015asymptotic}. For two-dimensional bounded domains, Jin and Wang \cite{jin2016boundedness} considered a parabolic-parabolic-elliptic case where the third equation is replaced by $0=\Delta v+\alpha_2 n-\beta_2 v$. By assuming that the repulsion prevails over the attraction in the sense that $\xi \alpha_2 -\chi \alpha_1 \geq 0$, they proved the existence of a unique classical global solution. For the corresponding 2D parabolic-parabolic-parabolic case, existence of global classical solution in bounded domains was obtained by Jin \cite{jins2015boundedness} under the condition of strict prevalence of the repulsion $\xi \alpha_2 -\chi \alpha_1 >0$. Still considering the fully parabolic case and relying on a new entropy-type inequality, Liu and Tao \cite{liu2015global} proved existence and boundedness of global solutions for initial data $[n_0,c_0,v_0]\in C(\bar{\Omega})\times W^{1,\infty}(\Omega)\times W^{1,\infty}(\Omega)$ and $\Vert n_0\Vert_{L^1}$ small with respect to $\frac{1}{\chi\alpha_1}$. For $N \geq 3$, Jin \cite{jins2015boundedness} obtained the existence of global weak solutions in the class $L^{\frac{5}{4}}((0,T);W^{1,\frac{5}{4}}(\Omega))$ ($T>0$) under the condition $\xi \alpha_2 -\chi \alpha_1 >0$. The existence of global classical solutions for $N\geq3$ is still open. Finally, we refer the reader to \cite{lin2016boundedness} for results about attraction-repulsion chemotaxis models with nonlinear diffusions.\\

$-$ {\it Attraction-repulsion chemotaxis model with logistic source but without fluid}\newline
If $\kappa_2=0,\kappa_1=1$ and $u=0,$ some resent results are known (cf. \cite{li2016large,li2016boundedness,li2016attraction,wang2016boundedness, zhang2015attraction,zheng2016boundedness}). For the parabolic-elliptic-elliptic case, Zhang and Li \cite{zhang2015attraction} proved the existence of global classical solution in bounded domains of $\mathbb{R}^N$, $N\geq 1$, with bounded uniformly continuous initial data, a growth restriction on the logistic source and suitable assumptions on the parameters. They also showed existence of global weak solutions when the logistic damping effect is rather mild. In the fully parabolic case, for bounded domains of $\mathbb{R}^N,$ $N=1,2,$ Li \textsl{et al.} \cite{li2016boundedness,li2016attraction} showed the existence of a unique classical global solution. On the other hand, Y. Wang \cite{wang2016boundedness} obtained the existence of 3D-classical global solutions when the term $\Delta n$ is replaced by $\nabla \cdot ((n+1)^{m-1} \nabla n)$ where either $m>4/3$ and the growth term is given by $\varsigma n-\mu n^\theta$ with $\theta\in(1,2)$ or $m \geq 1$ and the growth term is $\varsigma n-\mu n^2$ but assuming a specific condition on $\mu$. More recently, considering different types of conditions on the parameters and initial data, further results about existence of global solutions can be found in \cite{li2016large, shi2017boundedness, wu2017global, zheng2016boundedness}. \\

$-$ {\it Chemotaxis models with fluid related to (\ref{KNS})}\newline
If $v=0,\kappa_2=\varsigma=0,\kappa_1=1,$ and considering the case of Stokes equation, Espejo and Suzuki \cite{espejo2015reaction} proved the existence of a global weak solution in the class $L^{2}((0,T);H^{1}(\Omega))$ where $\Omega$ is bounded domains or the whole space $\mathbb{R}^{2}.$ In \cite{tao2015boundedness}, the authors considered the three-dimensional chemotaxis-Stokes system with $v=0,\kappa_2=0, \varsigma\geq0,\kappa_1=1$ and proved the existence of a classical global solution under the explicit condition $\mu\geq23.$ If the chemotaxis-fluid interaction is via the Navier-Stokes equations, to the best of our knowledge, there are few results in the literature  \cite{braukhoff2016global,lankeit2016long}. If $v=0,\kappa_1=1,\kappa_2=0,$ Braukhoff \cite{braukhoff2016global} introduces an exchange of oxygen between the fluid and its environment, which leads to a different boundary conditions to (\ref{initialdata}). Then, by requiring sufficiently smooth initial data, it was proved the existence of a unique global classical solution for $N=2$, as well as the existence of a global weak solution for $N=3.$ Results of weak solutions for the three dimensional case were obtained by Lankeit \cite{lankeit2016long}. On the other hand, for attraction-attraction chemotaxis model with fluid without logistic source ($\varsigma=\mu=\kappa_1=0,$ $\kappa_2=1$ and $\xi < 0$), Kozono \textsl{et al.} \cite{kozono2016existence} proved the existence of global mild solutions in the whole space $\mathbb{R}^{N}, N\geq 2,$ with small initial data in weak $L^p$-spaces.\\

The purpose of this paper is to analyze the existence of solutions for system (\ref{KNS})-(\ref{initialdata}) in a framework based on $L^p$-spaces (see definitions of spaces in (\ref{spaceX}), (\ref{spaceY}) and (\ref{spaceYexp})). The spaces employed here are inspired by ones used to study Navier-Stokes equations (see, e.g., Kato \cite{Kato1}). The novelty of model (\ref{KNS}) with respect to the previous references is summarized in the following aspects: from the physical point of view, we are considering an attraction-repulsion chemotaxis phenomenon, where the attractive signal is produced by the cells themselves and degrade at a constant rate or the attractive signal is consumed with a rate proportional to the amount of present organisms; the repulsive signal is produced by cells themselves and also degrade at a constant rate. The organisms and chemical substances are transported by an incompressible viscous fluid under the influence of a force due to the aggregation of cells; it is assumed that the cell density may proliferate following a logistic law, allowing the borderline case $\varsigma=0$ which reflects that cell proliferation is ignored. We also are able to consider the case $\mu=0$ and $\varsigma<0$ which corresponds to the case where there is no birth of cells and the death of organisms occurs at a constant decay rate. From the mathematical point of view, we are considering a larger class of initial data for chemotaxis type models with logistic term (with or without fluid) (see (\ref{spaceX})) in comparison to \cite{braukhoff2016global,espejo2015reaction,lankeit2016long,tao2015boundedness,li2016large,li2016boundedness,li2016attraction,wang2016boundedness, zhang2015attraction,zheng2016boundedness}, namely initial data in $L^p(\Omega)-$spaces; in particular, we are able to consider non-continuous data. Observe that the system (\ref{KNS}) does not possess scaling which makes this system more awkward than related problems such as the Navier-Stokes system, the semilinear heat equation with nonlinearities of type $u^p,$ or even several chemotaxis models as mentioned  before. In this paper, we achieve suitable norms with which, despite having no scaling relation, it is possible to obtain the existence of global mild solutions. Having identified the adequate function spaces, we perform estimates on $L^p$-spaces for integral operators appearing in the mild formulation (see integral equations (\ref{mapIE}) below). In order to estimate those operators, we need to use the decay properties of the Stokes and Neumann heat semigroups. Properties of time decay of the Stokes semigroup are well-known; in the case of heat semigroup with Neumann boundary conditions we need to obtain slightly sharper estimates than those previously found in \cite{winkler2010aggregation} (see Lemma \ref{heat_kernel2} and \ref{heat_kernel2z}). However, some of these estimates require the zero mean condition which carry out some difficulties when dealing with the nonlinear terms $\kappa_2 \gamma cn$ and $\varsigma n-\mu n^2$ in (\ref{KNS}). In order to  overcome this obstacle, we employ the quotient space $L^p/\sim$ of equivalence classes of functions in $L^p$-spaces whose difference is a constant. Then, we consider the Neumann heat semigroup on $L^p/\sim$ and use its obtained decay properties. The existence of mild solutions is obtained through an iterative approach that provides a Cauchy sequence that converges to the solution.

The plan of this paper is as follows. In Section 2, we give some preliminaries, prove estimates for the Neumann heat semigroup in our setting, and state our existence-uniqueness results. Finally, in Section 3, we prove our main results.

\section{Functional spaces and main results}
\hspace{0.4cm}Before stating our results, we introduce some functional spaces. Let
$C_{0,\sigma}^{\infty}(\Omega)$ denote the set of all $C^{\infty}$-
real vector functions ${\varphi}=(\varphi_{1},...,\varphi_{N})$ with
compact support in
$\Omega,$ such that div ${\varphi}=0.$ The closure of ${C}_{0,\sigma}%
^{\infty}$ with respect to norm $\Vert\cdot\Vert_{p}$ of space
$ {L}^{p},$ $1<p<\infty,$ is denoted by
${L}_{\sigma}^{p}(\Omega)^N$. By simplicity in the notation, we will not distinguish between vector and scalar functions; so, we denote ${L}_{\sigma}^{p}(\Omega)^N$ simply by ${L}_{\sigma}^{p}(\Omega)$ and so on. Let us recall the Helmholtz
decomposition $({L}^{p}(\Omega))={L}_{\sigma
}^{p}(\Omega)\oplus{G}^{p}(\Omega),\ 1<p<\infty,$ where ${G}^{p}%
(\Omega)=\{\nabla f\in{L}^{p}(\Omega):\,f\in
L_{loc}^{p}(\overline{\Omega})\}$ (cf. \cite{fujiwara1977l_r}). $\mathbb{P}_{p}$ denotes the projection operator from
${L}^{p}(\Omega)$ onto ${L}_{\sigma}^{p}(\Omega).$
The Stokes operator $A_{p}$ on $L_{\sigma}^{p}$ is defined by ${A}%
_{p}=-\mathbb{P}_{p}\Delta$ with domain $D({A}_{p})=\{{u}\in{W}^{2,p}%
(\Omega):\,{u}|_{\partial\Omega}={0}\}\cap{L}_{\sigma}^{p}.$ It
is well known that $-{A}_{p}$ generates a uniformly bounded analytic
semigroup $\{e^{-t{A}_{p}}\}_{t\geq0}$ of class $C_{0}$ in
${L}_{\sigma}^{p}.$

We also consider the heat semigroup under Neumann boundary conditions, i.e., the Neumann heat semigroup. The operator $\Delta_{p}$ with domain  $D({\Delta}_{p})=\{{u}\in{W}^{2,p} (\Omega):\,\frac{\partial u}{\partial\nu}|_{\partial\Omega}={0}\}$ also
generates a uniformly bounded analytic semigroup $\{e^{t\Delta_{p}}\}_{t\geq0}$ of class $C_{0}$ in ${L}^{p}$ (cf. \cite[Chapter 3]{lunardi2012analytic}). Properties of time decay for the Neumann heat semigroup will be discussed below.

Applying the operator projection $\mathbb{P}$ to the equation (\ref{KNS})$_{4}$, we can treat the problem
(\ref{KNS}) as the following problem of parabolic type in $\Omega\times (0,T)$:
\begin{equation}\label{KNS2}
\left\{
\begin{array}{lc}
n_{t}+u\cdot\nabla n=\Delta n-\chi \nabla\cdot( n\nabla c)+\xi \nabla\cdot( n\nabla v)+\varsigma n-\mu n^2 \!, &\\
c_{t}+u\cdot\nabla c=\Delta c+\kappa_1(\alpha_1 n- \beta_1 c)-\kappa_2 \gamma cn,  &\\
v_{t}+u\cdot\nabla v=\Delta v+\alpha_2 n-\beta_2 v,  &\\
u_{t}+\mathbb{P}(u\cdot\nabla)u=-A u-\mathbb{P}(n \nabla\phi). &
\end{array}
\right.
\end{equation}

As usual, we use Duhamel's principle in order to introduce the integral formulation associated with the system (\ref{KNS2})-(\ref{initialdata}):

\begin{equation}\label{mapIE}
\left\{
\begin{array}{lc}
n(x,t)=e^{\varsigma t}e^{t\Delta}n_{0}-{\displaystyle \int_{0}^{t}e^{\varsigma (t-\tau)}e^{(t-\tau)\Delta}(u\cdot\nabla n+\mu n^2)(\tau)d\tau}\\[.5cm]
\tab-{\displaystyle \int_{0}^{t} e^{\varsigma (t-\tau)}e^{(t-\tau)\Delta} \left(\vphantom{n^{(k)}} \nabla\cdot(\chi n \nabla c-\xi n \nabla v) \right) (\tau) d\tau} , \\[.5cm]
c(x,t)=e^{-\kappa_1 \beta_1 t}e^{t\Delta}c_{0}-{\displaystyle \int_{0}^{t}e^{-\kappa_1 \beta_1(t-\tau)}e^{(t-\tau)\Delta}(u\cdot\nabla c-\kappa_1 \alpha_1 n + \kappa_2 \gamma cn)(\tau)d\tau} , \\[.5cm]
v(x,t)=e^{-\beta_2 t}e^{t\Delta}v_{0}-{\displaystyle\int_{0}^{t}e^{-\beta_2(t-\tau)}e^{(t-\tau)\Delta}(u\cdot\nabla v-\alpha_2 n)(\tau)d\tau} , \\[.5cm]
u(x,t)=e^{-t A}u_{0}-{\displaystyle \int_{0}^{t} e^{-(t-\tau) A}\mathbb{P}(u\cdot\nabla u+n \nabla\phi)(\tau)d\tau}.
\end{array}
\right.
\end{equation}

In order to obtain global existence for (\ref{KNS2})-(\ref{initialdata}) we need some preliminaries results on the asymptotics of the Stokes semigroup and heat semigroup under Neumann boundary conditions. Properties of time decay of the Stokes semigroup are well-known. However, in the case of heat semigroup with Neumann boundary conditions we could not find a complete reference that includes all that is necessary for our analysis; then some of the estimates we use below are slightly sharper than those found in \cite{cao2016global,winkler2010aggregation}.\\

It is known that if $\Omega$ is a bounded domain of $\mathbb{R}^N,$ the Green function $G(x,t;\xi,\tau)$ associated to the heat equation with Neumann boundary condition can be expressed through an eigenfunctions expansion. Consider the eigenvalue problem

\begin{equation*}\label{EVP}
\left\{
\begin{array}{rc}
-\Delta \Psi & = \ \lambda \Psi\  \text{in }\ \Omega ,\\
\frac{\partial \Psi}{\partial \nu} & =  \ 0\  \text{on }\ \partial \Omega .
\end{array}
\right.
\end{equation*}
The eigenvalues are non-negative and zero is an eigenvalue to which corresponds a constant eigenfunction. Denoting by $\left\{ \lambda_i \right\}_{i=0}^{\infty}$ the increasing sequence of eigenvalues and by $\left\{ \Psi_i \right\}_{i=0}^{\infty}$ the orthonormal eigenfunctions, we have that
$$G(x,t;y,\tau)=\sum_{i=0}^{\infty} \Psi_{i}(x) \Psi_{i}(y) e^{-\lambda_i (t-\tau)} . $$
Therefore, the solution of the heat equation with no-flux boundary condition and initial data $w$ can be expressed as
\begin{equation}\label{solgreen}
u(x,t) = \left\langle w,\Psi_{0}\right\rangle \Psi_{0} + \sum_{i=1}^{\infty}\left\langle w,\Psi_{i}\right\rangle \Psi_{i} e^{-\lambda_{i} t}.
\end{equation}
In particular, if $\int_{\Omega}w=0$, the first term in the right side of (\ref{solgreen}) vanishes.
\begin{lemm}\label{green1}
Let $1\leq q\leq p \leq \infty$ and $\rho_1=\inf \{\lambda_i : i \in \mathbb{N}\}.$ There exists $C_0=C_0(\Omega,p,q) >0$ such that
\begin{equation*}
\left\| \int_{\Omega} G(x,t;y,\tau) w (y) d y \right\|_{p} \leq C_0 (t-\tau)^{-\frac{N}{2}(\frac{1}{q}-\frac{1}{p})} e^{-\rho_1 (t-\tau)} \left\| w \right\|_{{q}}\!,
\end{equation*}
for all $0\leq \tau <t$ and $w \in L^{q}(\Omega)$ satisfying $\int_\Omega w=0.$
\end{lemm}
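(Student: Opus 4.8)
The plan is to exploit the eigenfunction expansion (\ref{solgreen}) together with the spectral gap created by the mean-zero condition, splitting the time variable into a short-time régime, where unconditional smoothing of the Neumann kernel supplies the algebraic factor $(t-\tau)^{-\frac N2(\frac1q-\frac1p)}$, and a long-time régime, where the gap supplies exponential decay. First I would eliminate $\tau$: since $G(x,t;y,\tau)=\sum_i \Psi_i(x)\Psi_i(y)e^{-\lambda_i(t-\tau)}$ depends on $t,\tau$ only through $s:=t-\tau$, the operator $w\mapsto\int_\Omega G(\cdot,t;y,\tau)w(y)\,dy$ coincides with the Neumann heat semigroup $e^{s\Delta}$, so it suffices to bound $\|e^{s\Delta}w\|_p$ for $s>0$ and $w$ with $\int_\Omega w=0$. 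I would also record two facts used repeatedly: the semigroup preserves the mean, so $e^{s\Delta}w$ remains mean-zero for all $s$; and on the bounded domain $\Omega$ one has the continuous embedding $L^p(\Omega)\hookrightarrow L^r(\Omega)$ whenever $r\le p$.

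For short times, $0<s\le1$, I would invoke the unconditional $L^q$--$L^p$ smoothing estimate $\|e^{s\Delta}w\|_p\le C\,s^{-\frac N2(\frac1q-\frac1p)}\|w\|_q$, which follows from the Gaussian upper bound on the Neumann heat kernel (equivalently, it is the $s\le1$ part of the bound of \cite{winkler2010aggregation}, since $1+s^{-a}\le 2s^{-a}$ there). Because $e^{-\rho_1 s}\ge e^{-\rho_1}$ on this range, the exponential factor may be inserted for free, which yields the assertion on $0<s\le1$ without even using the mean-zero hypothesis.

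The heart of the matter is the long-time régime $s\ge1$, which is where I expect the main difficulty to lie. Here I would factor $e^{s\Delta}=e^{\frac12\Delta}\,e^{(s-1)\Delta}\,e^{\frac12\Delta}$ and estimate the three factors along the chain $L^q\to L^2\to L^2\to L^p$: the two outer factors are bounded maps $L^q\to L^2$ and $L^2\to L^p$ (by smoothing when the target exponent is the larger one, and by the embedding above otherwise), while the middle factor acts on the mean-zero subspace of $L^2$, where Parseval applied to (\ref{solgreen}) gives $\|e^{(s-1)\Delta}g\|_2\le e^{-\lambda_1(s-1)}\|g\|_2$ with $\lambda_1$ the first positive eigenvalue. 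This produces $\|e^{s\Delta}w\|_p\le C\,e^{-\lambda_1 s}\|w\|_q$. The final and conceptually decisive step is to recast this pure exponential as the stated product: writing $e^{-\lambda_1 s}=e^{-\rho_1 s}\,e^{-(\lambda_1-\rho_1)s}$ and using $e^{-cs}\le C_a\,s^{-a}$ for $s\ge1$ and $c>0$, the surplus exponential absorbs the algebraic prefactor $s^{-a}$ with $a=\frac N2(\frac1q-\frac1p)$. This trade-off is precisely the obstacle: the spectral gap must be partly spent in order to buy the polynomial decay, so that the sharpened pure-power estimate of the statement should be expected to hold with $\rho_1$ taken strictly below $\lambda_1$ (as the mode $w=\Psi_1$ already shows the rate $e^{-\lambda_1 s}$ admits no extra algebraic decay). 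Fixing such a $\rho_1<\lambda_1$ then patches the short- and long-time bounds into a single estimate valid for all $s>0$, completing the proof.
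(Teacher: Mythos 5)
Your proof is correct, and it takes a genuinely different route from the paper's. The paper proves the lemma in one stroke: the pointwise Gaussian bound (\ref{green}) together with Young's convolution inequality produces the algebraic factor $(t-\tau)^{-\frac{N}{2}(\frac{1}{q}-\frac{1}{p})}$, and the factor $e^{-\rho_1 (t-\tau)}$ is simply inserted in the second line of (\ref{green2}). That insertion is not justified by the displayed computation: Young's inequality gives the bound \emph{without} the exponential, and since $e^{-\rho_1(t-\tau)}\leq 1$ the asserted line is strictly stronger than what Young provides. Tellingly, the paper's computation never uses the hypothesis $\int_\Omega w=0$, yet without it exponential decay is impossible (constants do not decay at all), so the exponential factor cannot come from (\ref{green}) and Young alone. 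Your two-r\'egime argument --- unconditional Gaussian smoothing for $s\leq 1$, the factorization $e^{s\Delta}=e^{\frac{1}{2}\Delta}e^{(s-1)\Delta}e^{\frac{1}{2}\Delta}$ through the mean-zero subspace of $L^2$ with Parseval for $s\geq 1$ (using that the Neumann semigroup preserves the mean), and then the trade $e^{-\lambda_1 s}=e^{-\rho_1 s}e^{-(\lambda_1-\rho_1)s}\leq C\,s^{-a}e^{-\rho_1 s}$ --- supplies exactly the spectral input that the paper's proof skips, at the cost of some bookkeeping the paper's one-line argument avoids.

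Your closing observation is also correct and worth stressing: with $\rho_1$ equal to the first positive eigenvalue $\lambda_1$ (which is what the definition $\rho_1=\inf\{\lambda_i:i\in\mathbb{N}\}$ literally says), the stated inequality is false whenever $q<p$, since $w=\Psi_1$ gives $\|e^{s\Delta}w\|_p=e^{-\lambda_1 s}\|\Psi_1\|_p$, which cannot be dominated by $C_0\,s^{-a}e^{-\lambda_1 s}\|\Psi_1\|_q$ as $s\to\infty$ when $a=\frac{N}{2}(\frac{1}{q}-\frac{1}{p})>0$. So the lemma holds exactly in the weakened form you prove, with any fixed $0<\rho_1<\lambda_1$; this costs nothing downstream, since Lemmas \ref{heat_kernel2} and \ref{heat_kernel2z}, Lemma \ref{betamod}, and the proofs of Theorems \ref{theo1} and \ref{theo2} only require \emph{some} positive exponential rate, not the sharp one.
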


\begin{proof}
First of all, we recall the following pointwise estimate (cf. \cite[Theorem~2.2]{mora1983semilinear})
\begin{equation}\label{green}
G(x,t;y,\tau) \leq \frac{(t-\tau)^{-\frac{N}{2}}}{2^{N} \pi^{\frac{N}{2}}} e^{\frac{-\left|x-y \right|^2}{4(t-\tau)}}, \quad \forall t > \tau, \quad x,y \in \Omega.
\end{equation}
Let $1\leq l \leq \infty$ be such that $\frac{1}{p}=\frac{1}{l}+\frac{1}{q}-1.$ Using (\ref{green}) and the Young inequality, we get
\begin{eqnarray}\label{green2}
\left\| \int_{\Omega} G(x,t;y,\tau) w (y) d y \right\|_{{p}} &\leq & \left\| \int_{\mathbb{R}^N} (t-\tau)^{-\frac{N}{2}} e^{\frac{-\left|x-y \right|^2}{4(t-\tau)}} \left|\mathbf{1}_{\Omega}(y) w(y) \right|  d y \right\|_{L^{p}(\mathbb{R}^N)}  \nonumber \\
&\leq & e^{-\rho_1 (t-\tau)} \| (t-\tau)^{-\frac{N}{2}} e^{\frac{-\left|x \right|^2}{4(t-\tau)}} \|_{L^{l}(\mathbb{R}^N)}  \left\| \mathbf{1}_{\Omega} w  \right\|_{L^{q}(\mathbb{R}^N)} \nonumber \\
&= &C_0 (t-\tau)^{-\frac{N}{2}+\frac{N}{2l}} e^{-\rho_1 (t-\tau)} \left\|w\right\|_{q} \nonumber \\
&= &C_0 (t-\tau)^{-\frac{N}{2}(\frac{1}{q}-\frac{1}{p})} e^{-\rho_1 (t-\tau)} \left\|w\right\|_{q}\!. \nonumber \\
\end{eqnarray}
\end{proof}

\begin{remark}\label{green3}
Let $\left\{ e^{t\Delta} \right\}_{t \geq 0}$ be the Neumann heat semigroup and $1 \leq q\leq p \leq \infty$. In the semigroup notation $e^{t \Delta} w = \int_{\Omega} G(x,t;y,0) w(y)dy$, Lemma \ref{green1} implies that
\begin{equation}\label{e10}
\left\Vert e^{t\Delta} w \right\Vert _{p}\!   \leq C_0 t^{-\frac{N}{2}(\frac{1}{q}-\frac{1}{p})} e^{-\rho_1 t} \left\Vert w\right\Vert _{q}\!,
\end{equation}
for all $t>0$ and $w \in L^{q}(\Omega)$ satisfying $\int_\Omega w=0.$
\end{remark}

\begin{lemm} \label{heat_kernel2}
Let $\left\{ e^{t\Delta} \right\}_{t \geq 0}$ be the Neumann heat semigroup in $\Omega$ and $\rho_1=\inf \{\lambda_i : i \in \mathbb{N}\}$. Then, there exist positive constants $C_1,$ $C_2$ and  $C_3$ such that:
\begin{enumerate}[label=(\roman*)]
\item For $1 \leq p \leq \infty$ it holds

\begin{equation} \label{heat_o1}
\left\Vert \nabla e^{t\Delta} w \right\Vert _{p}\!   \leq C_1 t^{-\frac{1}{2}}\left\Vert w\right\Vert _{p}\!,
\end{equation}
for all $t>0$ and $w \in L^{p}(\Omega).$
\item For $1 \leq q \leq p \leq  \infty$ it holds

\begin{equation} \label{heat_o2}
\left\Vert \nabla e^{t\Delta} w \right\Vert _{p}\!   \leq C_2 t^{-\frac{N}{2}(\frac{1}{q}-\frac{1}{p})-\frac{1}{2}} e^{-\rho_1 t} \left\Vert w\right\Vert _{q}\!,
\end{equation}
for all $t>0$ and $w \in L^{q}(\Omega).$

\item For $1<q\leq p <\infty$ or  $1<q< p\leq \infty$ it holds
\begin{equation}\label{heat_o3}
\left\Vert e^{t\Delta} \nabla \cdot w \right\Vert _{p}\!   \leq C_3 t^{-\frac{N}{2}(\frac{1}{q}-\frac{1}{p})-\frac{1}{2}} e^{-\rho_1 t} \left\Vert w\right\Vert _{q}\!,
\end{equation}
for all $t>0$ and $w \in (L^{q}(\Omega))^N.$
\end{enumerate}

\end{lemm}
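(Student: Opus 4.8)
The plan is to deduce the $L^{p}\to L^{p}$ gradient bound from a pointwise Gaussian estimate on the \emph{spatial gradient} of the Neumann Green function. For a smooth bounded domain one has the companion of (\ref{green}), namely a bound of the form $|\nabla_{x} G(x,t;y,0)|\leq C t^{-\frac{N+1}{2}} e^{-c|x-y|^{2}/t}$ valid for all $t>0$ and $x,y\in\Omega$. Dominating the absolute value of $\nabla e^{t\Delta}w=\int_{\Omega}\nabla_{x}G(x,t;y,0)\,w(y)\,dy$ by the convolution of $\mathbf{1}_{\Omega}|w|$ with the kernel $K_{t}(z)=C t^{-\frac{N+1}{2}}e^{-c|z|^{2}/t}$ and applying Young's inequality with exponents $(1,p,p)$ gives $\|\nabla e^{t\Delta}w\|_{p}\leq \|K_{t}\|_{L^{1}(\mathbb{R}^{N})}\|w\|_{p}$. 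Since $\|K_{t}\|_{L^{1}(\mathbb{R}^{N})}=C' t^{-1/2}$, this yields (\ref{heat_o1}) for every $1\leq p\leq\infty$, with no restriction at the endpoints (this is precisely the advantage of the kernel/Young route over a Riesz-transform argument, which would fail at $p=1,\infty$).

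\textbf{Part (ii).} The key observation is that the gradient annihilates constants, so one may first subtract the mean. Writing $\bar w=\frac{1}{|\Omega|}\int_{\Omega}w$ and $\tilde w=w-\bar w$, the Neumann semigroup preserves constants ($e^{t\Delta}\bar w=\bar w$), hence $\nabla e^{t\Delta}w=\nabla e^{t\Delta}\tilde w$ with $\int_{\Omega}\tilde w=0$ and $\|\tilde w\|_{q}\leq C\|w\|_{q}$ on the bounded domain. This reduction is exactly what lets us bring in the exponential decay of Remark \ref{green3}, which is only available for zero-mean data. I would then use the semigroup property to split $e^{t\Delta}=e^{(t/2)\Delta}e^{(t/2)\Delta}$ and factor $\nabla e^{t\Delta}\tilde w=(\nabla e^{(t/2)\Delta})(e^{(t/2)\Delta}\tilde w)$. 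Applying part (i) to the outer factor and Remark \ref{green3} to the inner one (legitimate, since $e^{(t/2)\Delta}\tilde w$ again has zero mean) gives $\|\nabla e^{t\Delta}\tilde w\|_{p}\leq C_{1}(t/2)^{-1/2}\,C_{0}(t/2)^{-\frac{N}{2}(\frac{1}{q}-\frac{1}{p})}e^{-\rho_{1}t/2}\|\tilde w\|_{q}$, which is (\ref{heat_o2}) after collecting the powers of $t$ and reading off the exponential factor from the decaying inner factor.

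\textbf{Part (iii).} I would obtain this by duality from part (ii), using the self-adjointness of the Neumann heat semigroup. For $\phi\in L^{p'}(\Omega)$ one has $\langle e^{t\Delta}\nabla\cdot w,\phi\rangle=\langle \nabla\cdot w, e^{t\Delta}\phi\rangle=-\langle w,\nabla e^{t\Delta}\phi\rangle$, the boundary term vanishing because $e^{t\Delta}\phi$ satisfies the Neumann condition. Taking the supremum over $\|\phi\|_{p'}\leq 1$ and invoking part (ii) with the exponent pair $(p',q')$ gives $\|e^{t\Delta}\nabla\cdot w\|_{p}\leq \|w\|_{q}\,\sup_{\|\phi\|_{p'}\leq 1}\|\nabla e^{t\Delta}\phi\|_{q'}\leq C_{3}\,t^{-\frac{N}{2}(\frac{1}{q}-\frac{1}{p})-\frac{1}{2}}e^{-\rho_{1}t}\|w\|_{q}$, where one checks that $\frac{1}{p'}-\frac{1}{q'}=\frac{1}{q}-\frac{1}{p}$ makes the exponent match and that $q\leq p$ forces $p'\leq q'$, so that (\ref{heat_o2}) is indeed applicable. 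The stated ranges $1<q\leq p<\infty$ or $1<q<p\leq\infty$ are exactly what keep the dual exponents $p',q'$ away from the forbidden endpoints $1$ and $\infty$.

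\textbf{Main obstacle.} The routine ingredient is the kernel/Young estimate behind part (i); the substantive point is the extraction of the exponential factor in parts (ii) and (iii), which the plain Gaussian bound (\ref{green}) cannot detect. The whole strategy hinges on the fact that $\nabla$ (and, under duality, $\nabla\cdot$) removes the constant mode, after which the spectral-gap decay recorded in Remark \ref{green3} supplies the exponential; keeping the bookkeeping of the decay rate consistent through the half-time splitting and the duality is the delicate point. A secondary technicality is the justification of the integration by parts in part (iii) (control of the boundary term) and the endpoint ranges for the duality, which is what dictates the precise hypotheses imposed on $p$ and $q$.
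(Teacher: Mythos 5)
Your parts (ii) and (iii) run along essentially the same lines as the paper: the same mean-subtraction plus half-time factorization $\nabla e^{t\Delta}w=\nabla e^{(t/2)\Delta}e^{(t/2)\Delta}(w-\bar w)$ for (ii), and the same duality via self-adjointness for (iii). (In fact your duality argument, used with the dual pair $p'=1$, $q'<\infty$ — note that $p=\infty$ does push $p'$ to the endpoint $1$, which is harmless only because (ii) is stated down to source exponent $1$ — disposes of the case $p=\infty$ in one stroke, where the paper runs a separate argument: $e^{t\Delta}\nabla\cdot w=e^{(t/2)\Delta}\bigl(e^{(t/2)\Delta}\nabla\cdot w\bigr)$ on compactly supported smooth $w$, using that $e^{(t/2)\Delta}\nabla\cdot w$ has zero mean, plus density.) Part (i) is where you genuinely diverge: the paper never touches gradient bounds on the Green function, but instead uses the resolvent estimates $\Vert(\lambda+L)^{-1}u\Vert_p\leq C\Vert u\Vert_p/|\lambda|$ and $\Vert\nabla^{2}(\lambda+L)^{-1}u\Vert_p\leq C\Vert u\Vert_p$, interpolates to get $\Vert\nabla(\lambda+L)^{-1}u\Vert_p\leq C\Vert u\Vert_p/|\lambda|^{1/2}$, and evaluates a Dunford contour integral. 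Your kernel-plus-Young route is legitimate, including the endpoints $p=1,\infty$, but the gradient Gaussian bound you invoke is typically stated in the literature only on finite time intervals; for all $t>0$ you must patch large times with the spectral gap (for $t\geq1$ write $\nabla e^{t\Delta}=\nabla e^{\Delta}e^{(t-1)\Delta}$). The same absorption trick is what both you and the paper tacitly need in (ii): the half-time splitting honestly produces $e^{-\rho_1 t/2}$ (as your displayed inequality correctly shows; the paper writes $e^{-\rho_1 t}$ at that step without comment), and upgrading to the stated rate $e^{-\rho_1 t}$ requires treating $t\leq1$ and $t\geq1$ separately.

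One concrete error: your justification of the vanishing boundary term in (iii) is wrong as stated. Integration by parts produces $\int_{\partial\Omega}(w\cdot\nu)\,e^{t\Delta}\phi\,dS$, and the Neumann condition $\partial_\nu e^{t\Delta}\phi=0$ on $\partial\Omega$ does nothing to kill this term — it controls the normal derivative of $e^{t\Delta}\phi$, not its trace, and not $w\cdot\nu$. The term vanishes because one first takes $w\in(C_0^\infty(\Omega))^N$, which is also the only way to give meaning to $e^{t\Delta}\nabla\cdot$ on general $L^q$ vector fields, namely as the bounded extension from compactly supported smooth fields; this is precisely the density step the paper performs. This is a local fix rather than a collapse of your approach — you did flag the boundary term as the delicate point — but the reason you gave for its disappearance is incorrect, not merely incomplete.
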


\begin{proof} \textit{(i)}  Let $L=-\Delta$ with $D(L)=\{u\in W^{2,p}(\Omega): \frac{\partial u}{\partial \nu}=0\ \mbox{on}\ \partial\Omega\}.$ Then, for $1 \leq p \leq \infty$ and $\omega\in(0,\pi/2)$, the operator $(\lambda+L)^{-1}$ is bounded in $L^p$ for $\lambda\in\mathbb{C}\setminus \{0\}$ with $\vert\mbox{arg}\lambda\vert\leq \pi-\omega,$ and the following estimates hold (cf. \cite{angiuli2010analytic,lunardi2012analytic})
\begin{eqnarray}\label{dun1}
\Vert (\lambda+L)^{-1}u\Vert_p\leq C_p\Vert u\Vert_p/\vert \lambda\vert\ \ \mbox{and} \ \ \Vert \nabla^2(\lambda+L)^{-1}u\Vert_p\leq C_p\Vert u\Vert_p,
\end{eqnarray}
for all $u\in L^p(\Omega)$. By interpolation and (\ref{dun1}), we obtain
\begin{eqnarray}\label{dun2}
\Vert \nabla(\lambda+L)^{-1}u\Vert_p\leq C_p\Vert u\Vert_p/\vert \lambda\vert^{1/2},
\end{eqnarray}
for all $u\in L^p(\Omega).$
In order to obtain the estimate (\ref{heat_o1}), we compute the gradient of the Dunford integral as follows
\begin{eqnarray*}
\nabla e^{-tL}w=\frac{1}{2\pi i}\int_{\Gamma}\nabla e^{\lambda t}(\lambda+L)^{-1},
\end{eqnarray*}
where the path $\Gamma=\Gamma_0\cup\Gamma_{\pm},$ with  $\Gamma_{\pm}:\vert \lambda\vert e^{\pm i\varphi},$ $\frac{1}{t}\leq \vert \lambda\vert,$ and $\Gamma_0:(\frac{1}{t})e^{i\mbox{arg}\lambda},$ $-\varphi\leq \mbox{arg}\lambda\leq \varphi$. Now, using (\ref{dun2}), we arrive at
\begin{eqnarray*}
\Vert \nabla e^{-tL}w\Vert_p=\left\Vert \frac{1}{2\pi i}\int_{\Gamma}\nabla e^{\lambda t}(\lambda+L)^{-1}w\right\Vert_p\leq \frac{C}{2\pi}\int_{\Gamma} e^{\lambda t}\vert \lambda\vert^{-1/2}\Vert w\Vert_p\leq C_{1}t^{-1/2}\Vert w\Vert_p.
\end{eqnarray*}
\textit{(ii)}
We write $\bar{w}:=\frac{1}{\left|\Omega\right|}\int_{\Omega}w$ and thus $\int_{\Omega}(w-\bar{w})=0$. Then, it follows from (\ref{e10}) and (\ref{heat_o1}) that
\begin{eqnarray}\label{neu2}
\left\| \nabla e^{t \Delta} w\right\|_{p} = \left\| \nabla e^{\frac{t}{2} \Delta} e^{\frac{t}{2} \Delta} (w-\bar{w}) \right\|_{p}
&\leq & C_1 t^{-\frac{1}{2}}\| e^{\frac{t}{2} \Delta} (w-\bar{w}) \|_{p} \\ \nonumber
&\leq & C t^{-\frac{N}{2}(\frac{1}{q}-\frac{1}{p})-\frac{1}{2}} e^{-\rho_1 t} \left\|(w-\bar{w})\right\|_{q} \\ \nonumber
&\leq & C_2 t^{-\frac{N}{2}(\frac{1}{q}-\frac{1}{p})-\frac{1}{2}} e^{-\rho_1 t} \left\|w\right\|_{q}\!.
\end{eqnarray}
\textit{(iii)} First we consider the case $1 < q \leq p < \infty .$ Let $\varphi \in C_{0}^{\infty}(\Omega).$ Recalling that $e^{t \Delta}$ is self-adjoint in $L^2,$ integrating by parts and using (\ref{heat_o2}), we get
\begin{eqnarray}\label{neu3}
 \left|\int_{\Omega} e^{t \Delta} \nabla \cdot w \varphi \right| &= & \left| -\int_{\Omega} w \cdot \nabla e^{t \Delta} \varphi \right| \nonumber \\
&\leq & \left\| w \right\|_{q} \left\| \nabla e^{t \Delta} \varphi \right\|_{q'} \nonumber \\
&\leq & C_3 \left\| w \right\|_{q} t^{-\frac{N}{2}(\frac{1}{p'}-\frac{1}{q'})-\frac{1}{2}} e^{-\rho_1 t} \| \varphi \|_{p'}, \nonumber \\
\end{eqnarray}
where $\frac{1}{p}+\frac{1}{p'}=1$ and $\frac{1}{q}+\frac{1}{q'}=1$. Since $\frac{1}{p'}-\frac{1}{q'}=\frac{1}{q}-\frac{1}{p}$, we can complete the first part of the proof by taking the supremum over all $\varphi \in C_{0}^{\infty}(\Omega)$ satisfying $\left\| \varphi \right\|_{p'}\leq 1$. For the case $1 < q < p=\infty$, assume that $w \in (C^{\infty}_{0}(\Omega))^N$. Then $\int_{\Omega}e^{\frac{t}{2}\Delta}\nabla \cdot w = \int_{\Omega} \nabla \cdot w =0.$ Thus, from (\ref{e10}) and the first part, we can estimate

\begin{eqnarray}\label{neu4}
\left\| e^{t \Delta} \nabla \cdot w\right\|_{\infty} &= & \left\| e^{\frac{t}{2} \Delta} (e^{\frac{t}{2} \Delta} \nabla \cdot w) \right\|_{\infty}  \nonumber \\
&\leq &  C_0 t^{-\frac{n}{2q}} e^{-\rho_1 t} \| e^{\frac{t}{2} \Delta} \nabla \cdot w \|_{q} \nonumber \\
&\leq &  C_3 t^{-\frac{n}{2q}} t^{-\frac{1}{2}} e^{-2\rho_1 t} \left\|w\right\|_{q}\!. \nonumber \\
\end{eqnarray}
Finally, by \eqref{neu4} and an argument of density, we obtain (\ref{heat_o3}).

\end{proof}

Some basic estimates of the Stokes semigroup $\{e^{-tA}\}_{t\geq0}$ in $L_\sigma^{p}(\Omega)$ are listed in the following lemma (see \cite{giga1981}).

\begin{lemm} \label{stokes_kernel}
Let $\{e^{-tA}\}_{t\geq0}$ be the Stokes semigroup in $L^{p}_{\sigma}(\Omega)$ and $\rho_2 \in (0,\nu_2),$ where $\nu_2=\inf \operatorname{Re} \sigma(A)>0.$ Then:
\begin{enumerate}[label=(\roman*)]
\item  For $1< q \leq p < \infty$, there exists $C_4=C_4(\Omega,p,q)>0$ such that
$$\left\Vert e^{tA} w \right\Vert _{p}\!   \leq C_4 t^{-\frac{N}{2}(\frac{1}{q}-\frac{1}{p})}e^{-\rho_2 t}\left\Vert w\right\Vert _{q}\!,$$
for all $t>0$ and $w\in L_{\sigma}^{q}(\Omega).$
\item For $1<q\leq p < \infty$, there exists $C_5=C_5(\Omega,p,q)>0$ such that
$$\left\Vert \nabla e^{tA} w \right\Vert _{p}\!   \leq C_5 t^{-\frac{N}{2}(\frac{1}{q}-\frac{1}{p})-\frac{1}{2}} e^{-\rho_2 t} \left\Vert w\right\Vert_{q}\!,$$
for all $t>0$ and $w\in L_{\sigma}^{q}(\Omega).$
\item For $1<q\leq p < \infty$, there exists $C_6=C_6(\Omega,p,q)>0$ such that
$$\left\Vert e^{tA} \nabla \cdot w \right\Vert _{p}\!   \leq C_6 t^{-\frac{N}{2}(\frac{1}{q}-\frac{1}{p})-\frac{1}{2}} e^{-\rho_2 t} \left\Vert w\right\Vert_{q}\!,$$
for all $t>0$ and $w\in (L_{\sigma}^{q}(\Omega))^N.$
\end{enumerate}
\end{lemm}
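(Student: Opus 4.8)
The three bounds are the Stokes-semigroup counterparts of the Neumann heat-semigroup estimates in Lemma~\ref{heat_kernel2}, and the plan is to obtain them by combining the standard analytic-semigroup properties of the Stokes operator recorded in \cite{giga1981} in exactly the same way as in that proof. Write $e^{-tA}$ for the bounded analytic semigroup generated by $-A=-A_q$ on $L^q_\sigma$. Two facts are needed at the outset. Since $-A$ is the generator of a bounded analytic semigroup whose spectrum satisfies $\operatorname{Re}\sigma(A)\ge\nu_2>0$, the shifted operator $-(A-\rho_2)$ still generates a bounded analytic semigroup for every $\rho_2\in(0,\nu_2)$, and hence, for each $\alpha\ge 0$,
\[
\|A^{\alpha}e^{-tA}w\|_{q}\le C_{\alpha}\,t^{-\alpha}e^{-\rho_2 t}\|w\|_{q},\qquad t>0 .
\]
Second, Giga's characterization of the domains of fractional powers gives the embedding $D(A_q^{\alpha})\hookrightarrow L^p_\sigma$ for $1<q\le p<\infty$ whenever $2\alpha\ge N(\frac1q-\frac1p)$, so that $\|v\|_{p}\le C\|A^{\alpha}v\|_{q}$ on $D(A_q^\alpha)$.

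With these in hand, parts \textit{(i)} and \textit{(ii)} are immediate. Taking $\alpha=\frac N2(\frac1q-\frac1p)$ and applying the embedding to $v=e^{-tA}w$ and then the decay estimate yields
\[
\|e^{-tA}w\|_{p}\le C\|A^{\alpha}e^{-tA}w\|_{q}\le C_4\,t^{-\frac N2(\frac1q-\frac1p)}e^{-\rho_2 t}\|w\|_{q}.
\]
For \textit{(ii)} I would use the norm equivalence $\|\nabla v\|_{p}\le C\|A^{1/2}_{p}v\|_{p}$ on $D(A_p^{1/2})$ (also contained in \cite{giga1981}); applying the embedding to $A^{1/2}e^{-tA}w$ and then the decay estimate with exponent $\alpha+\tfrac12$ gives
\[
\|\nabla e^{-tA}w\|_{p}\le C\|A^{\alpha+\frac12}e^{-tA}w\|_{q}\le C_5\,t^{-\frac N2(\frac1q-\frac1p)-\frac12}e^{-\rho_2 t}\|w\|_{q}.
\]

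Part \textit{(iii)} I would handle by duality, exactly as in Lemma~\ref{heat_kernel2}\textit{(iii)}. Here $e^{-tA}\nabla\cdot w$ is read as $e^{-tA}\mathbb{P}(\nabla\cdot w)$, a solenoidal field; since the adjoint of $A_p$ on $L^p_\sigma$ is $A_{p'}$ on $L^{p'}_\sigma$ and the adjoint semigroup preserves divergence-free fields, pairing against $\varphi\in C_{0,\sigma}^{\infty}(\Omega)$ and integrating by parts gives
\[
\Big|\int_\Omega \big(e^{-tA}\mathbb{P}(\nabla\cdot w)\big)\cdot\varphi\Big|=\Big|\int_\Omega w:\nabla e^{-tA_{p'}}\varphi\Big|\le \|w\|_{q}\,\|\nabla e^{-tA_{p'}}\varphi\|_{q'} .
\]
Estimating the last factor by part \textit{(ii)} applied to $A_{p'}$ with the pair $(p',q')$ (admissible since $q\le p$ forces $p'\le q'$), using $\frac1{p'}-\frac1{q'}=\frac1q-\frac1p$, and taking the supremum over $\|\varphi\|_{p'}\le1$ produces the claimed bound.

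The delicate point, and the reason these estimates are confined to the strict range $1<q\le p<\infty$ instead of reaching the endpoints as in Lemma~\ref{heat_kernel2}, is that the whole scheme rests on the $L^r$-boundedness of the Helmholtz projection $\mathbb{P}$ and on the fractional-power and domain-embedding machinery for $A_r$, both of which are available only for $1<r<\infty$; there is no explicit positive kernel analogous to (\ref{green}) that would let one reach $p=\infty$ or $q=1$. In return, no zero-mean condition on $w$ is needed here, because the spectral gap $\nu_2>0$ is intrinsic to the Stokes operator on $L^p_\sigma$ and supplies the exponential factor directly.
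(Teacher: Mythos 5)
The paper states this lemma without proof, simply citing \cite{giga1981}, and your argument is a correct reconstruction of the standard proof behind that citation: exponential decay of $A^{\alpha}e^{-tA}$ from the spectral gap, Giga's embedding $D(A_q^{\alpha})\hookrightarrow L^{p}_{\sigma}$ for $2\alpha\geq N(\tfrac{1}{q}-\tfrac{1}{p})$, the equivalence $\left\Vert \nabla v\right\Vert_{p}\leq C\Vert A_{p}^{1/2}v\Vert_{p}$, and a duality argument for part \textit{(iii)} that mirrors the paper's own proof of Lemma \ref{heat_kernel2}\textit{(iii)}. Since this is essentially the same route the paper delegates to its reference, no further comment is needed.
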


For $1\leq p\leq \infty$ we consider  the space $L^p/\sim$ of all equivalence classes of functions in $L^p(\Omega)$ whose difference is a constant, that is, if $f\in L^p(\Omega),$ and $\left[ f\right]$ denotes the equivalence class of $f,$ then $g\in \left[ f \right]$ if and only if, $f-g$ is a constant. $L^p/\sim$ is a vector space with the operations $+$ and scalar product defined respectively by $\left[ f \right]+\left[ g \right]=\left[ f+g\right]$ and $a\left[ f \right]=\left[ a f \right],$ $a\in\mathbb{R}.$ The product in $L^p/\sim$ is defined by $\left[ f \right]\left[ g \right]=\left[(f-\bar{f})(g-\bar{g})\right],$ where $\bar{f}=\frac{1}{\vert\Omega\vert}\int_\Omega f$ and $\bar{g}=\frac{1}{\vert\Omega\vert}\int_\Omega g.$  It holds that $L^p/\sim$ is a Banach space with the norm
\begin{eqnarray}\label{norm}
\Vert \left[ f\right]\Vert_{p/\sim}:=\inf\{ \Vert f+c\Vert_{p}\ :\ c\ \mbox{is a constant}\}.
\end{eqnarray}
For fixed $f\in L^p$, note that $c\rightarrow c+f$ is continuous from $\mathbb{R}$ to $L^p$.
Using this fact and that the norm $\Vert\cdot\Vert_p$ is a continuous function in $L^p$,
we can show that for each $[f]\in L^p/\sim$ there exists $f^*\in[f]$ such that $\Vert f^*\Vert_q=\Vert \left[ f\right]\Vert_{q/\sim}$.
Notice that for $w \in L^p/\sim$, it holds that $e^{t\Delta}[w]=[e^{t\Delta}w]$. Thus, $e^{t\Delta}$ can be extended naturally to $L^p/\sim$ by making $e^{t\Delta}[w]=[e^{t\Delta}w]$ for $w \in L^p/\sim.$ Next lemma establishes the $L^p/\sim$ to $L^q/\sim$ decay estimate of $e^{t \Delta}$. 

\begin{lemm} \label{heat_kernel2z}
Let $\left\{ e^{t\Delta} \right\}_{t \geq 0}$ be the Neumann heat semigroup in $\Omega.$ Then
for $1 \leq q \leq p \leq  \infty$ there exists $C_0=C_0(\Omega,p,q)>0$ such that
\begin{equation}\label{e10z}
\left\Vert e^{t\Delta} [w] \right\Vert _{p/\sim}\!   \leq C_0 t^{-\frac{N}{2}(\frac{1}{q}-\frac{1}{p})} e^{-\rho_1 t} \left\Vert [w]\right\Vert _{q/\sim}\!,
\end{equation}
for all $[w]\in L^{q}/\sim.$

\end{lemm}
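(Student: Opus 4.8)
The plan is to reduce the quotient-space estimate (\ref{e10z}) to the zero-mean estimate (\ref{e10}) of Remark \ref{green3} by working with the canonical mean-zero representative of each class. The key structural fact is that the Neumann heat semigroup fixes constants: since $\lambda_0=0$ with constant eigenfunction $\Psi_0$, formula (\ref{solgreen}) gives $e^{t\Delta}c=c$ for every constant $c$. Consequently $e^{t\Delta}$ preserves means, $\int_\Omega e^{t\Delta}w=\int_\Omega w$, which is precisely what makes $e^{t\Delta}[w]=[e^{t\Delta}w]$ well defined on $L^p/\sim$ and what will let me transport (\ref{e10}) to the quotient.

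First I would fix $[w]\in L^q/\sim$ and introduce its mean-zero representative $\tilde w:=w-\bar w$, where $\bar w=\frac{1}{|\Omega|}\int_\Omega w$; note that $\tilde w$ does not depend on the chosen representative of $[w]$, since replacing $w$ by $w+c$ leaves $w-\bar w$ unchanged. Because $e^{t\Delta}\bar w=\bar w$, the function $e^{t\Delta}\tilde w=e^{t\Delta}w-\bar w$ differs from $e^{t\Delta}w$ by a constant and is therefore a representative of $[e^{t\Delta}w]$. Using the definition (\ref{norm}) of the quotient norm as an infimum over representatives, this yields
\begin{equation*}
\Vert e^{t\Delta}[w]\Vert_{p/\sim}=\Vert [e^{t\Delta}w]\Vert_{p/\sim}\leq \Vert e^{t\Delta}\tilde w\Vert_{p}.
\end{equation*}
Since $\int_\Omega\tilde w=0$, estimate (\ref{e10}) of Remark \ref{green3} applies and bounds the right-hand side by $C_0\,t^{-\frac{N}{2}(\frac1q-\frac1p)}e^{-\rho_1 t}\Vert\tilde w\Vert_q$.

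It then remains to compare $\Vert\tilde w\Vert_q$ with the quotient norm $\Vert[w]\Vert_{q/\sim}$. For one direction, $\tilde w\in[w]$ gives $\Vert[w]\Vert_{q/\sim}\leq\Vert\tilde w\Vert_q$. For the reverse bound I would estimate the mean by H\"older's inequality: for any constant $c$ one has $|\overline{w+c}|\leq|\Omega|^{-1/q}\Vert w+c\Vert_q$ (with the convention $|\Omega|^{0}=1$ at $q=\infty$), hence $\Vert\overline{w+c}\Vert_q\leq\Vert w+c\Vert_q$, and since $\tilde w=(w+c)-\overline{w+c}$ the triangle inequality gives $\Vert\tilde w\Vert_q\leq 2\Vert w+c\Vert_q$ for every $c$. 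Taking the infimum over $c$ yields $\Vert\tilde w\Vert_q\leq 2\Vert[w]\Vert_{q/\sim}$. Combining the three displayed estimates and relabeling the resulting constant $2C_0$ again as $C_0$ proves (\ref{e10z}).

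The step I expect to require the most care is the norm comparison $\Vert\tilde w\Vert_q\leq 2\Vert[w]\Vert_{q/\sim}$, since it must hold uniformly in $q\in[1,\infty]$ including the endpoints $q=1$ and $q=\infty$; the remainder is a direct transfer of (\ref{e10}) through the linear, constant-preserving action of $e^{t\Delta}$ on the quotient.
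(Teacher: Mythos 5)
Your proposal is correct and follows essentially the same route as the paper's proof: both pass to the mean-zero representative $w-\bar{w}$, apply the zero-mean estimate (\ref{e10}), and then compare $\Vert w-\bar{w}\Vert_{q}$ with $\Vert [w]\Vert_{q/\sim}$ via H\"older's inequality on the mean before taking the infimum over constants. Your explicit remark that $e^{t\Delta}$ fixes constants is the same fact the paper uses implicitly when writing $\Vert e^{t\Delta}[w]\Vert_{p/\sim}=\inf_{c}\Vert e^{t\Delta}(w+c)\Vert_{p}$, so there is no substantive difference.
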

\begin{proof} For each $w\in L^q(\Omega),$ we write $\bar{w}:=\frac{1}{\left|\Omega\right|}\int_{\Omega}w.$ Then we have that $w-\bar{w}\in [w]$ and $\int_{\Omega}(w-\bar{w})=0.$ Thus, from (\ref{norm}) and (\ref{e10}) we have
\begin{eqnarray}\label{ker18}
\left\Vert e^{t\Delta} [w] \right\Vert _{p/\sim}&=&\inf\{\Vert e^{t\Delta}(w+c)\Vert_{p}\ :\ c\ \mbox{constant}\}\nonumber\\
&\leq & \Vert e^{t\Delta}(w-\bar{w})\Vert_{p}\nonumber\\
&\leq & C_0 t^{-\frac{N}{2}(\frac{1}{q}-\frac{1}{p})} e^{-\rho_1 t} \left\Vert w-\bar{w}\right\Vert _{q}\nonumber\\
&\leq & C_0 t^{-\frac{N}{2}(\frac{1}{q}-\frac{1}{p})} e^{-\rho_1 t} (\left\Vert w+c\right\Vert _{q}+\left\Vert \bar{w}+c\right\Vert _{q})\nonumber\\
&\leq & C_0 t^{-\frac{N}{2}(\frac{1}{q}-\frac{1}{p})} e^{-\rho_1 t} (\left\Vert w+c\right\Vert _{q}+\vert\Omega\vert^{\frac{1}{q}-1}\left\Vert \bar{w}+c\right\Vert _{1})\nonumber\\
&\leq & C_7 t^{-\frac{N}{2}(\frac{1}{q}-\frac{1}{p})}e^{-\rho_1 t} \left\Vert w+c\right\Vert _{q}.
\end{eqnarray}
Then, taking the infimum over the set of constants and noting that $[w-\bar{w}]=[w]$, we get
\begin{eqnarray}
\left\Vert e^{t\Delta} [w] \right\Vert _{p/\sim}
\leq  C_0 t^{-\frac{N}{2}(\frac{1}{q}-\frac{1}{p})} e^{-\rho_1 t} \Vert [{w}]\Vert _{q/\sim}.\nonumber
\end{eqnarray}

\end{proof}
Now we introduce suitable time-dependent functional spaces to
study the initial value problem (\ref{KNS2})-(\ref{initialdata}). For $N<r\leq \infty$ and $0<T\leq\infty,$ we define the Banach space $\mathcal{X}=\mathcal{X}_{r}^T$ of initial data by
\begin{equation}\label{spaceX}
 \begin{aligned}
\mathcal{X} := & \left\{ \vphantom{\sup_{0<t<T}} \left[ [n_{0}],[c_{0}],[v_{0}],u_{0}\right] \in (L^{\frac{N}{2}}/\sim)\times (L^\infty/\sim)\times (L^\infty/\sim)\times (L_\sigma^{N}): \right.\\
&\ \ \ \ \left. \sup_{0<t<T} t^{\frac{N}{2}(\frac{1}{N}-\frac{1}{r})} \left\Vert \nabla e^{t\Delta}c_{0}\right\Vert _{r}< \infty , \quad
 \sup_{0<t<T} t^{\frac{N}{2}(\frac{1}{N}-\frac{1}{r})} \left\Vert \nabla e^{t\Delta}v_{0}\right\Vert _{r}< \infty \right\}\!,
 \end{aligned}
\end{equation}
with the norm $
\left\Vert \left [[n_{0}],[c_{0}],[v_{0}],u_{0}\right]\right\Vert _{\mathcal{X}} :=  \left\Vert [n_{0}]\right\Vert _{\mathcal{X}_1}+\left\Vert [c_{0}]\right\Vert _{\mathcal{X}_2}+\left\Vert [v_{0}]\right\Vert _{\mathcal{X}_3}+\left\Vert u_{0}\right\Vert _{\mathcal{X}_4}\!,
$
where

\begin{equation*}\label{normX}
\left\{
\begin{array}{lc}
\left\Vert [n_{0}]\right\Vert _{\mathcal{X}_1}:=\left\Vert [n_{0}]\right\Vert _{\frac{N}{2} /\sim }\!,\ \quad
\left\Vert [c_{0}]\right\Vert _{\mathcal{X}_2}:=\left\Vert [c_{0}]\right\Vert _{\infty/\sim}+ {\displaystyle \sup_{0<t<T} t^{\frac{N}{2}(\frac{1}{N}-\frac{1}{r})} \left\Vert \nabla e^{t\Delta}c_{0}\right\Vert _{r}\!,}\\[0.4cm]
\left\Vert [v_{0}]\right\Vert _{\mathcal{X}_3}:= \left\Vert [v_{0}]\right\Vert _{\infty/\sim} + {\displaystyle \sup_{0<t<T} t^{\frac{N}{2}(\frac{1}{N}-\frac{1}{r})} \left\Vert \nabla e^{t\Delta}v_{0}\right\Vert _{r}\!,}\ \quad
\left\Vert u_{0}\right\Vert _{\mathcal{X}_4}:=\left\Vert u_{0}\right\Vert _{N}\!.\
\end{array}
\right.
\end{equation*}
For $N<p,r\leq \infty$ and $N/2<q\leq\infty$, we consider the Banach spaces $\mathcal{Y}=\mathcal{Y}_{p,r,q}^T$ and $\mathcal{Y}^{\tiny{\mbox{exp}}}=\mathcal{Y}^{\tiny{\mbox{exp}}}_{p,r,q}$ defined by
\begin{equation}\label{spaceY}
\begin{aligned}
\mathcal{Y} := & \left\{ \vphantom{n^{(k)}} [[n],[c],[v],u]: t^{\frac{N}{2}(\frac{2}{N}-\frac{1}{q})}n\in BC(\left[0,T\right);L^{q}),\ c\in L^{\infty}(\left[0,T\right);L^{\infty}), \right.\\
&\ \ \ \  t^{\frac{N}{2}(\frac{1}{N}-\frac{1}{r})}\nabla c\in BC(\left[0,T\right);L^{r}),\ v\in L^{\infty}(\left [0,T\right);L^{\infty}), \\
&\ \ \  \left. t^{\frac{N}{2}(\frac{1}{N}-\frac{1}{r})}\nabla v\in BC(\left[0,T\right);L^{r}),\ t^{\frac{N}{2}(\frac{1}{N}-\frac{1}{p})}u\in BC(\left[0,T\right);L_\sigma^{p}) \right\}\!
\end{aligned}
\end{equation}
and
\begin{equation}\label{spaceYexp}
\begin{aligned}
\mathcal{Y}^{\tiny{\mbox{exp}}}:= & \left\{ \vphantom{n^{(k)}} [[n],[c],[v],u]: e^{\tilde{\varsigma}t}t^{\frac{N}{2}(\frac{2}{N}-\frac{1}{q})}n\in BC(\left[0,\infty\right);L^{q}),\ e^{\min \{ \kappa_1 \beta_1 ,\tilde{\varsigma} \} t} c\in L^{\infty}(\left[0,\infty\right);L^{\infty}), \right.\\
&\ \ \ \  t^{\frac{N}{2}(\frac{1}{N}-\frac{1}{r})}\nabla c\in BC(\left[0,\infty\right);L^{r}),\ e^{\min \{ \beta_2, \tilde{\varsigma} \} t} v\in L^{\infty}(\left [0,\infty\right);L^{\infty}), \\
&\ \ \  \left. t^{\frac{N}{2}(\frac{1}{N}-\frac{1}{r})}\nabla v\in BC(\left[0,\infty\right);L^{r}),\ e^{\min \{ \rho_2 ,\tilde{\varsigma} \} t} t^{\frac{N}{2}(\frac{1}{N}-\frac{1}{p})} u\in BC(\left[0,\infty\right);L_\sigma^{p}) \right\}\!.
\end{aligned}
\end{equation}
The space $\mathcal{Y}$ is endowed with the norm
\begin{equation*}
\begin{aligned}
\left\Vert \left [[n],[c],[v],u\right] \right\Vert _{\mathcal{Y}} :=  \left\Vert [n]\right\Vert _{\mathcal{Y}_1}+\left\Vert [c]\right\Vert _{\mathcal{Y}_2}+\left\Vert [v]\right\Vert _{\mathcal{Y}_3}+\left\Vert u\right\Vert _{\mathcal{Y}_4}\!,
\end{aligned}
\end{equation*}
where

\begin{equation*}\label{normY}
\left\{
\begin{array}{lc}
\left\Vert [n]\right\Vert _{\mathcal{Y}_1}:= {\displaystyle \sup_{0<t<T} t^{\frac{N}{2}(\frac{2}{N}-\frac{1}{q})}\left\Vert [n(t)]\right\Vert _{q/\sim}} \text{,} \\[0.4cm]
\left\Vert [c]\right\Vert _{\mathcal{Y}_2}:= {\displaystyle \sup_{0<t<T} \left\Vert [c(t)]\right\Vert _{\infty/\sim}+ {\displaystyle \sup_{0<t<T} t^{\frac{N}{2}(\frac{1}{N}-\frac{1}{r})}\left\Vert \nabla c(t)\right\Vert _{r}}} \text{,}\ \\[0.4cm]
\left\Vert [v]\right\Vert _{\mathcal{Y}_3}:= {\displaystyle \sup_{0<t<T} \left\Vert [v(t)]\right\Vert _{\infty/\sim}}+ {\displaystyle \sup_{0<t<T} t^{\frac{N}{2}(\frac{1}{N}-\frac{1}{r})}\left\Vert \nabla v(t)\right\Vert _{r}} \text{,}\ \\
\left\Vert u\right\Vert _{\mathcal{Y}_4}:={\displaystyle \sup_{0<t<T} t^{\frac{N}{2}(\frac{1}{N}-\frac{1}{p})}\left\Vert u(t)\right\Vert _{p}} \text{.}\
\end{array}
\right.
\end{equation*}
The space $\mathcal{Y}^{\tiny{\mbox{exp}}}$ is endowed with the norm
\begin{equation*}
\begin{aligned}
\left\Vert \left [[n],[c],[v],u\right] \right\Vert _{\mathcal{Y}^{\tiny{\mbox{exp}}}} :=  \left\Vert [n]\right\Vert _{\mathcal{Y}^{\tiny{\mbox{exp}}}_1}+\left\Vert [c]\right\Vert _{\mathcal{Y}^{\tiny{\mbox{exp}}}_2}+\left\Vert [v]\right\Vert _{\mathcal{Y}^{\tiny{\mbox{exp}}}_3}+\left\Vert u\right\Vert _{\mathcal{Y}^{\tiny{\mbox{exp}}}_4}\!,
\end{aligned}
\end{equation*}
where

\begin{equation*}\label{normYexp}
\left\{
\begin{array}{lc}
\left\Vert [n]\right\Vert _{\mathcal{Y}^{\tiny{\mbox{exp}}}_1}:= {\displaystyle \sup_{0<t<\infty} e^{\tilde{\varsigma}t} t^{\frac{N}{2}(\frac{2}{N}-\frac{1}{q})}\left\Vert [n(t)]\right\Vert _{q/\sim}} \text{,} \\[0.4cm]
\left\Vert [c]\right\Vert _{\mathcal{Y}^{\tiny{\mbox{exp}}}_2}:= {\displaystyle \sup_{0<t<\infty} e^{\min \{ \kappa_1 \beta_1 ,\tilde{\varsigma} \} t} \left\Vert [c(t)]\right\Vert _{\infty/\sim}+ {\displaystyle \sup_{0<t<\infty} t^{\frac{N}{2}(\frac{1}{N}-\frac{1}{r})}\left\Vert \nabla c(t)\right\Vert _{r}}} \text{,}\ \\[0.4cm]
\left\Vert [v]\right\Vert _{\mathcal{Y}^{\tiny{\mbox{exp}}}_3}:= {\displaystyle \sup_{0<t<\infty} e^{\min \{ \beta_2 ,\tilde{\varsigma} \} t} \left\Vert [v(t)]\right\Vert _{\infty/\sim}}+ {\displaystyle \sup_{0<t<\infty} t^{\frac{N}{2}(\frac{1}{N}-\frac{1}{r})}\left\Vert \nabla v(t)\right\Vert _{r}} \text{,}\ \\
\left\Vert u\right\Vert _{\mathcal{Y}^{\tiny{\mbox{exp}}}_4}:={\displaystyle \sup_{0<t<\infty} e^{\min \{ \rho_2 ,\tilde{\varsigma} \} t}t^{\frac{N}{2}(\frac{1}{N}-\frac{1}{p})}\left\Vert u(t)\right\Vert _{p}} \text{.}\
\end{array}
\right.
\end{equation*}

From now on, for the density $n$ and concentrations functions $c,v$, we will denote abusively these functions in $L^p$ and their equivalence classes in $L^p/\sim$ in the same way. For example, we write $[n,c,v,u]\in\mathcal{Y}$ in place of $[[n],[c],[v],u]\in\mathcal{Y}$. With this convection, now we are in position to establish the notion of solution that we will deal with.
\begin{defi} \label{defimild}
Let $[n_0,c_0,v_0,u_0]\in\mathcal{X}.$ A mild solution for the initial value problem (\ref{KNS})-(\ref{initialdata}) is a vector $[n,c,v,u]\in\mathcal{Y}$ satisfying the integral system (\ref{mapIE}). Thus, the three first integral equations in (\ref{mapIE}) must be understood as equivalence classes. In addition, in the fourth integral equation in (\ref{mapIE}), $n$ can be taken as any representative of $[n]$ since the term $\int_{0}^{t} e^{-(t-\tau) A}n\nabla\phi d\tau$ is invariant in the set $\{\tilde{n};[\tilde{n}]=[n]\}$.
\end{defi}
In what follows, we state our results.
\begin{theo}\label{theo1}
Assume either
\begin{enumerate}[label=(\roman*)]
\item $N=3,$ $N \leq s \leq \infty$, $\frac{N}{2}<q<N$, $N<p<\frac{Nqs}{Ns+Nq-2sq}$ and $N<r<\frac{Nq}{N-q}$,
\end{enumerate}
or
\begin{enumerate}[label=(\roman*)]
\item[\textit{(}ii\textit{)}] $N=2$, $N<s<\infty$, $\frac{s}{s-1}\leq q<N$, $\frac{q}{q-1}\leq p<\infty$ and $N<r<\frac{Nq}{N-q}$,
\end{enumerate}
or
\begin{enumerate}[label=(\roman*)]
\item[\textit{(}iii\textit{)}] $N=2$, $s = \infty$, $\frac{N}{2}< q<N$, $\frac{q}{q-1}\leq p<\infty$ and $N<r<\frac{Nq}{N-q}$.
\end{enumerate}
 Let $0<T<\infty$ be arbitrary, $[n_0,c_0,v_0,u_0]\in\mathcal{X}$ and $t^{\frac{1}{2}-\frac{N}{2s}} \nabla\phi \in BC(\left[0,T\right);L^{s})$. There exists $\delta>0$ such that if $\left\Vert \left [n_{0},c_{0},v_{0},u_{0}\right]\right\Vert _{\mathcal{X}}<\delta,$ then problem (\ref{KNS})-(\ref{initialdata}) has a mild solution $[n,c,v,u]\in \mathcal{Y}$. Such mild solution is unique in a suitable closed ball in $\mathcal{Y}$.
\end{theo}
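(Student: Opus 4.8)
The plan is to construct the mild solution as a fixed point of the map $\Phi=(\Phi_1,\Phi_2,\Phi_3,\Phi_4)$ defined by the four right-hand sides of \eqref{mapIE}, and to run a contraction argument on a small closed ball $B_R=\{w\in\mathcal{Y}:\|w\|_{\mathcal{Y}}\le R\}$. Splitting $\Phi=\Phi^{\mathrm{lin}}+\Phi^{\mathrm{nl}}$, where $\Phi^{\mathrm{lin}}$ collects the semigroup terms acting on the data and $\Phi^{\mathrm{nl}}$ the Duhamel integrals, the whole proof reduces to two estimates: a linear bound $\|\Phi^{\mathrm{lin}}\|_{\mathcal{Y}}\le C\,\|[n_0,c_0,v_0,u_0]\|_{\mathcal{X}}$, and a bilinear bound $\|\Phi^{\mathrm{nl}}(w)-\Phi^{\mathrm{nl}}(\tilde w)\|_{\mathcal{Y}}\le C\,(\|w\|_{\mathcal{Y}}+\|\tilde w\|_{\mathcal{Y}})\,\|w-\tilde w\|_{\mathcal{Y}}$. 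Granting these, I would fix $R\sim\delta$ and take $\delta$ small enough that $\Phi$ maps $B_R$ into itself and is a strict contraction there; the Banach fixed point theorem then gives existence and uniqueness of the fixed point in $B_R$, which is the asserted mild solution. Since $0<T<\infty$ is finite, the exponential prefactors $e^{\varsigma t},e^{-\kappa_1\beta_1 t},e^{-\beta_2 t}$ are all bounded on $[0,T)$ regardless of the sign of $\varsigma$, so no sign restriction on the reaction parameters is needed here (enforcing decay as $t\to\infty$ is the role of the space $\mathcal{Y}^{\mathrm{exp}}$ in the global-in-time statement).

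For the linear estimate I would treat the three scalar data in the quotient framework and the velocity datum by the Stokes bounds. The terms $e^{\varsigma t}e^{t\Delta}[n_0]$, $e^{-\kappa_1\beta_1 t}e^{t\Delta}[c_0]$, $e^{-\beta_2 t}e^{t\Delta}[v_0]$ are controlled in the $\mathcal{Y}_1,\mathcal{Y}_2,\mathcal{Y}_3$ norms by Lemma~\ref{heat_kernel2z} (for the $L^{q}/\sim$ and $L^{\infty}/\sim$ parts) together with Lemma~\ref{heat_kernel2}(ii) (for the $\nabla c,\nabla v$ parts), the bounded exponentials only improving the estimate; the gradient weights built into $\mathcal{X}_2,\mathcal{X}_3$ are exactly what make $\sup_t t^{\frac N2(\frac1N-\frac1r)}\|\nabla e^{t\Delta}c_0\|_r$ finite by hypothesis. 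The term $e^{-tA}u_0$ is handled by Lemma~\ref{stokes_kernel}(i) with source index $N$ and target index $p$, producing precisely the weight $t^{-\frac N2(\frac1N-\frac1p)}$ of $\mathcal{Y}_4$. Each piece is $\le C\,\|[n_0,c_0,v_0,u_0]\|_{\mathcal{X}}$.

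The heart of the argument is the bilinear estimate, and for each Duhamel term I would follow the same three steps: bound the integrand in a suitable $L^{m}$ (or $L^{m}/\sim$) norm by H\"older, apply the matching semigroup decay estimate, and integrate the resulting Beta-type kernel in $\tau$. For the chemotactic and transport divergence terms, such as $\int_0^t e^{(t-\tau)\Delta}\nabla\!\cdot(\chi n\nabla c-\xi n\nabla v)\,d\tau$ and $\int_0^t e^{(t-\tau)\Delta}\nabla\!\cdot(u\,n)\,d\tau$ (rewriting $u\cdot\nabla n=\nabla\!\cdot(u\,n)$ via $\nabla\cdot u=0$), I use Lemma~\ref{heat_kernel2}(iii) together with H\"older inequalities like $\|n\nabla c\|_{m}\le\|n\|_q\|\nabla c\|_r$; inserting the defining weights of $\mathcal{Y}$ leaves integrals of the form $\int_0^t (t-\tau)^{-a}e^{-\rho_1(t-\tau)}\tau^{-b}\,d\tau$, whose convergence ($a,b<1$) and whose output time exponent are exactly what the parameter ranges in (i)--(iii) are engineered to guarantee. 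The fluid self-interaction $\mathbb{P}(u\cdot\nabla u)=\mathbb{P}\,\nabla\!\cdot(u\otimes u)$ and the buoyancy forcing $\mathbb{P}(n\nabla\phi)$ are treated by Lemma~\ref{stokes_kernel}(iii) and (i) respectively, the hypothesis $t^{\frac12-\frac N{2s}}\nabla\phi\in BC([0,T);L^{s})$ being precisely what closes the $n\nabla\phi$ integral; the zeroth-order couplings $\kappa_1\alpha_1 n$ and $\alpha_2 n$ feeding the $c$ and $v$ equations are estimated directly by Lemma~\ref{heat_kernel2z}.

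The hard part is the non-divergence nonlinearities $\mu n^2$ and $\kappa_2\gamma cn$, which in general have nonzero mean, so the sharp mean-zero estimate of Lemma~\ref{green1} does not apply to them as functions. This is exactly what forces the quotient-space setting: I would read these terms as the products $[n][n]=[(n-\bar n)^2]$ and $[c][n]=[(c-\bar c)(n-\bar n)]$ in $L^p/\sim$ prescribed by the product on that space, estimate them through the norm-minimizing representative $n^\ast$ (the one with $\|n^\ast\|_q=\|[n]\|_{q/\sim}$) via the elementary equivalence $\|n-\bar n\|_q=\|n^\ast-\overline{n^\ast}\|_q\le 2\,\|[n]\|_{q/\sim}$, and then apply the quotient decay estimate of Lemma~\ref{heat_kernel2z}. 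Thus $\|[cn]\|_{q/\sim}\le\|c-\bar c\|_\infty\,\|n-\bar n\|_q\lesssim\|[c]\|_{\infty/\sim}\|[n]\|_{q/\sim}$, and $\mu n^2$ is handled analogously after fixing the admissible H\"older exponent compatible with the $L^{q}$ control of $n$, the time integration then proceeding as above. The principal obstacle is bookkeeping: one must choose, simultaneously for all four equations, H\"older exponents and source/target indices so that \emph{every} kernel exponent stays below $1$ while the product of the inserted weights exactly regenerates the defining weight of each component norm $\mathcal{Y}_j$. The three regimes (i)--(iii)---$N=3$, and $N=2$ with $s<\infty$ or $s=\infty$---are precisely the parameter windows in which this simultaneous closure is feasible. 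Once the linear and bilinear bounds are in place, shrinking $\delta$ (hence $R$) makes $\Phi$ a contraction on $B_R$ and yields the unique mild solution.
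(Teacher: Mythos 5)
Your overall architecture coincides with the paper's: same spaces $\mathcal{X},\mathcal{Y}$, same semigroup estimates (Lemmas \ref{heat_kernel2}, \ref{heat_kernel2z}, \ref{stokes_kernel}), the same H\"older-plus-Beta-function bookkeeping for the Duhamel integrals, and the same quotient-space device for the mean-nonzero nonlinearities $\mu n^2$ and $\kappa_2\gamma cn$ (your inequality $\|n-\bar n\|_q\le 2\|[n]\|_{q/\sim}$ via the minimizing representative is correct and is exactly how the paper exploits $L^q/\sim$, cf.\ (\ref{uo2a})). However, your top-level reduction contains a genuine gap: $\Phi^{\mathrm{nl}}$ is \emph{not} bilinear, so the claimed estimate $\|\Phi^{\mathrm{nl}}(w)-\Phi^{\mathrm{nl}}(\tilde w)\|_{\mathcal{Y}}\le C(\|w\|_{\mathcal{Y}}+\|\tilde w\|_{\mathcal{Y}})\|w-\tilde w\|_{\mathcal{Y}}$ is false. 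The Duhamel integrals contain three terms \emph{linear} in the unknown $n$ --- $\kappa_1\alpha_1 n$ in the $c$-equation, $\alpha_2 n$ in the $v$-equation, and $n\nabla\phi$ in the $u$-equation --- and their contribution to the difference is $C_L\,\|n-\tilde n\|_{\mathcal{Y}_1}$, where $C_L$ is proportional to $\alpha_1$, $\alpha_2$ and $\sup_{0<t<T}t^{\frac12-\frac{N}{2s}}\|\nabla\phi(t)\|_s$ (times fixed Beta-function constants, cf.\ (\ref{co1a}), (\ref{uo2})) and is \emph{independent of} $R$. Since the theorem assumes no smallness of $\alpha_1,\alpha_2,\nabla\phi$, one may have $C_L\ge 1$; then your map satisfies only $\|\Phi(w)\|_{\mathcal{Y}}\le C\delta+C_L R+CR^2$, so $\Phi$ need not map $B_R$ into itself nor be a contraction, no matter how small $\delta$ and $R$ are taken. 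Shrinking the ball does not help, because the offending Lipschitz piece does not shrink with $R$.

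The missing idea is to exploit the triangular structure of these couplings: they all flow \emph{out of} $n$, while the $n$-equation itself has purely quadratic nonlinearities. The paper implements this through the staggered iteration (\ref{aproxIE}), in which $c^{(k+1)}$, $v^{(k+1)}$ and $u^{(k+1)}$ are built from the already-updated $n^{(k+1)}$ rather than $n^{(k)}$. Consequently, in the boundedness step (\ref{global01}) the linear couplings contribute $\|n_0\|_{\mathcal{X}_1}$ plus quadratic terms (this is why $X_0=4\|n_0\|_{\mathcal{X}_1}+\|c_0\|_{\mathcal{X}_2}+\|v_0\|_{\mathcal{X}_3}+\|u_0\|_{\mathcal{X}_4}$ carries the factor $4$, and why the closing inequality $C(X_0+20R^2)=R$ is solvable for small $X_0$), and in the Cauchy-difference step the term $\|n^{(k+1)}-n^{(k)}\|_{\mathcal{Y}_1}$ appearing in (\ref{resto0c04}) is first bounded by $CR$ times the previous difference via (\ref{resto0n04}), yielding the genuine contraction factor $4CR<1$ in (\ref{resto03}). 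You could repair your fixed-point formulation equivalently: either compose, replacing $n$ by $\Phi_1(w)$ inside the $c$-, $v$- and $u$-components of the map, or renorm the product space by $\lambda\|n\|_{\mathcal{Y}_1}+\|c\|_{\mathcal{Y}_2}+\|v\|_{\mathcal{Y}_3}+\|u\|_{\mathcal{Y}_4}$ with $\lambda>C_L$, which makes the (nilpotent) linear part strictly contractive. Without one of these devices, the Banach fixed point argument as you state it does not close; everything else in your proposal (the linear data estimate, the choice of H\"older exponents, the role of conditions (i)--(iii), and the quotient-space treatment) is consistent with the paper's proof.
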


\begin{remark}
The class of functions where we find the mild solution is settled by the exponents $p,q$ and $r.$ In particular, the ranges for $p$ and $r$ are simultaneously determined in terms of $q$, which suggests that the density of microorganisms has a dominant behavior in relation to the chemicals and fluid, see Remark \ref{rem1a}.
\end{remark}
\begin{remark}\label{rem1}
Notice that the mild solution $[n,c,v,u]\in \mathcal{Y}$ for (\ref{KNS})-(\ref{initialdata}) obtained in Theorem \eqref{theo1} is global in the sense that the fixed $T>0$ can be taken arbitrarily large. An interesting model related to (\ref{KNS}) is when we assumed $\varsigma=0$. This kind of model describes several biological behaviors, for instance, the phenomenon of broadcast spawning (cf. \cite{kiselev2012biomixing}); in this case, the term $-\mu n^2$ describes the reaction (fertilization) term. If $\varsigma=0$, the solution $[n,c,v,u]$ is defined on $[0,\infty)$.
\end{remark}
\begin{remark}
Note that if $\rho_1 \geq \varsigma$ then, with minor modifications, the solution provided by theorem \ref{theo1} are defined on $[0,\infty);$ moreover, the solution has a exponential decay. This is obtained by using the exponential decay of the estimates in lemmas \ref{heat_kernel2}, \ref{stokes_kernel} and \ref{heat_kernel2z} and following the proof of Theorem \ref{theo2} below. In fact, in this case we have that $[n],[c],[v],u$ decay exponentially to zero which means that $n,c,v$ converge toward constants and $u\rightarrow 0$.
\end{remark}

\noindent An interesting situation, not only from the mathematical point of view, but also in several physical situations (e.g. fertilization processes), occurs when we consider that there is no birth of cells and the death of organisms occurs at a constant decay rate. In this case we have the following result:

\begin{theo}\label{theo2} Let us consider in (\ref{KNS}) the term $-\tilde{\varsigma} n$ in place of $\varsigma n-\mu n^2$ with $\tilde{\varsigma}\geq0$. Assume that $T=\infty$, ${[n_0,c_0,v_0,u_0]\in\mathcal{X}}$ and $\nabla\phi \in L^{\infty}([0,\infty); L^{N}).$ If $N=3$ assume that the exponents $p,$ $q$ and $r$ satisfy either $\textit{(}i\textit{)},$ $\textit{(}ii\textit{)}$ or $\textit{(}iii\textit{)}$ below:

\begin{enumerate}[label=(\roman*)]
\item $\frac{N}{2}<q<N, \quad N<p<\frac{Nq}{N-q}, \quad N<r<\frac{Nq}{N-q},$
\item $q=N, \quad N<p<\infty, \quad N<r<\infty,$
\item $N<q<2N, \quad N<p<\frac{Nq}{q-N}, \quad q \leq r<\frac{Nq}{q-N}$.
\end{enumerate}
In the case $N=2$, assume that the exponents $p,$ $q$ and $r$ satisfy one of the above condition $\textit{(}ii\textit{)}$ or $\textit{(}iii\textit{)}$. Then, there exists $\delta>0$ such that problem (\ref{KNS})-(\ref{initialdata}) has a global mild solution $[n,c,v,u]\in \mathcal{Y}^{\tiny{\mbox{exp}}}$ provided that $\left\Vert \left [n_{0},c_{0},v_{0},u_{0}\right]\right\Vert _{\mathcal{X}}<\delta$. This solution is unique in a suitable closed ball in $\mathcal{Y}^{\tiny{\mbox{exp}}}$.
\end{theo}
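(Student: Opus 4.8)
The plan is to solve the integral system (\ref{mapIE}) — with $\varsigma n-\mu n^2$ replaced by $-\tilde\varsigma n$, so that $\mu=0$ and the quadratic term disappears — by a fixed-point argument in the Banach space $\mathcal{Y}^{\mathrm{exp}}$ of (\ref{spaceYexp}). I write the right-hand sides of (\ref{mapIE}) as a map $\Phi[n,c,v,u]=(\Phi_1,\Phi_2,\Phi_3,\Phi_4)$, each $\Phi_i$ being a linear term acting on the data plus Duhamel integrals over the nonlinearities; with $-\tilde\varsigma n$ in the first equation the corresponding propagator becomes $e^{-\tilde\varsigma(t-\tau)}e^{(t-\tau)\Delta}$, while the $c,v,u$ propagators carry $e^{-\kappa_1\beta_1(t-\tau)}$, $e^{-\beta_2(t-\tau)}$ and the Stokes semigroup. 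The structural feature I would exploit is that, once $\mu=0$, the $n$-equation contains only bilinear nonlinearities ($u\cdot\nabla n=\nabla\cdot(un)$ and $\nabla\cdot(\chi n\nabla c-\xi n\nabla v)$), whereas the only linear couplings — the production terms $-\kappa_1\alpha_1 n$, $-\alpha_2 n$ and the buoyancy term $n\nabla\phi$ — feed exclusively \emph{from} $n$ \emph{into} $c,v,u$ and never back into $n$. Thus $\Phi$ is affine-data $+$ a strictly triangular linear coupling $+$ a bounded bilinear part.

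First I would estimate the linear data terms in the $\mathcal{Y}^{\mathrm{exp}}$-norm. Lemma \ref{heat_kernel2z} with $(q,p)=(N/2,q)$ gives $t^{\frac{N}{2}(\frac{2}{N}-\frac1q)}\|e^{t\Delta}[n_0]\|_{q/\sim}\leq C e^{-\rho_1 t}\|[n_0]\|_{\frac N2/\sim}$, and multiplying by the weight $e^{\tilde\varsigma t}$ against the propagator factor $e^{-\tilde\varsigma t}$ leaves exactly $e^{-\rho_1 t}\leq 1$. Likewise Lemma \ref{heat_kernel2z} with $p=q=\infty$ controls $\|e^{t\Delta}[c_0]\|_{\infty/\sim}$ and $\|e^{t\Delta}[v_0]\|_{\infty/\sim}$, the gradient pieces $t^{\frac{N}{2}(\frac1N-\frac1r)}\|\nabla e^{t\Delta}c_0\|_r$ come straight from the definition of $\mathcal{X}$, and Lemma \ref{stokes_kernel}(i) with $q=N$ handles $e^{-tA}u_0$. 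In each component the weight $e^{\min\{\cdot,\tilde\varsigma\}t}$ is dominated by the genuine decay rate ($\kappa_1\beta_1$, $\beta_2$, $\rho_2$, or $\tilde\varsigma$), and the surplus $e^{-\rho_j t}\leq 1$ absorbs the rest, so the linear part is bounded by $C\|[n_0,c_0,v_0,u_0]\|_{\mathcal{X}}$.

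The heart of the matter is the nonlinear estimates. The transport terms and the chemotactic flux $\nabla\cdot(\chi n\nabla c-\xi n\nabla v)$ are all in divergence form (using $\nabla\cdot u=0$), so I would treat them with the divergence smoothing bounds of Lemma \ref{heat_kernel2}(iii) and Lemma \ref{stokes_kernel}(iii), which already furnish the factors $e^{-\rho_1(t-\tau)}$, $e^{-\rho_2(t-\tau)}$. The zeroth-order reaction terms $-\kappa_1\alpha_1 n$, $\kappa_2\gamma cn$, $-\alpha_2 n$ are not in divergence form; for these I pass to the quotient space and use Lemma \ref{heat_kernel2z}, interpreting $cn$ through the product $[c][n]=[(c-\bar c)(n-\bar n)]$ of $L^p/\sim$, for which the bilinear bound $\|[c][n]\|_{q/\sim}\leq C\|[c]\|_{\infty/\sim}\|[n]\|_{q/\sim}$ follows from H\"older after choosing norm-minimizing representatives together with $\|c-\bar c\|_\infty\leq 2\|[c]\|_{\infty/\sim}$ and $\|n-\bar n\|_q\leq 2\|[n]\|_{q/\sim}$. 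The buoyancy term is estimated by Lemma \ref{stokes_kernel}(i) and the hypothesis $\nabla\phi\in L^\infty([0,\infty);L^N)$; since $\mathbb{P}\nabla\phi=0$, the integral is unchanged under an additive constant in $n$ and is well defined on $[n]$ exactly as in Definition \ref{defimild}. Inserting these pointwise-in-$\tau$ bounds reduces everything to showing that integrals of the type $\int_0^t (t-\tau)^{-a}e^{-\rho(t-\tau)}\tau^{-b}e^{-\sigma\tau}\,d\tau$, multiplied by the prescribed weight $e^{\min\{\cdot,\tilde\varsigma\}t}\,t^{\theta}$, stay bounded uniformly for $t\in[0,\infty)$ and reproduce the target time-weight; the ranges (i)–(iii) for $p,q,r$ (and $s$, through $\nabla\phi$) are precisely those forcing $a<1$ and $b<1$, while splitting at $\tau=t/2$ and using $\rho_1,\rho_2>0$ with $\min\{\cdot,\tilde\varsigma\}\leq\tilde\varsigma$ and $\min\{\cdot,\tilde\varsigma\}\leq\rho_j$ lets the outer weight be absorbed on each half.

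To close, I combine these into the scalar inequalities $\|n\|_{\mathcal{Y}^{\mathrm{exp}}_1}\leq a_n+b\|w\|^2$ and $\|c\|,\|v\|,\|u\|\leq a+\lambda\|n\|_{\mathcal{Y}^{\mathrm{exp}}_1}+b\|w\|^2$, where $w=[n,c,v,u]$. Because the $n$-estimate carries no linear self-coupling, substituting it into the others yields $\|w\|_{\mathcal{Y}^{\mathrm{exp}}}\leq A+B\|w\|_{\mathcal{Y}^{\mathrm{exp}}}^2$ with $A$ proportional to the data and $B$ a fixed constant — so the quadratic inequality closes a small invariant ball $B_R$ \emph{without} any smallness of $\nabla\phi$, $\alpha_i$ or $\beta_i$. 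The analogous difference estimates are genuinely contractive for $R$ small, giving a unique fixed point; Banach's theorem (equivalently the Picard iteration of the Introduction) then produces the unique global mild solution in $B_R\subset\mathcal{Y}^{\mathrm{exp}}$ for $\|[n_0,c_0,v_0,u_0]\|_{\mathcal{X}}<\delta$. I expect the principal obstacle to be the third step: closing all the bilinear bounds on the \emph{unbounded} interval $[0,\infty)$ with the correct exponential weights, tracking component by component the competition between the semigroup rates $\rho_1,\rho_2$ and the reaction rates $\tilde\varsigma,\kappa_1\beta_1,\beta_2$ encoded in the $\min$'s of (\ref{spaceYexp}), so that no growing exponential survives and the residual time-singularities stay integrable — which is exactly what pins down (i)–(iii). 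A secondary point is the triangular handling of the fixed-coefficient linear couplings from $n$, which here replaces the role the $\mu n^2$ analysis played in Theorem \ref{theo1}.
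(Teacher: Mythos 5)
Your proposal is correct and takes essentially the same route as the paper: it too estimates the mild formulation (with propagator $e^{-\tilde{\varsigma}(t-\tau)}e^{(t-\tau)\Delta}$) in $\mathcal{Y}^{\mathrm{exp}}$ via Lemmas \ref{heat_kernel2}, \ref{stokes_kernel} and \ref{heat_kernel2z} together with the quotient-space device for the zeroth-order terms, feeds the fixed-coefficient linear couplings from $n$ triangularly into the $c$, $v$, $u$ estimates exactly as in the iteration scheme (\ref{aproxIE})--(\ref{global01}), and closes with the same small-ball Picard argument yielding (\ref{globalz}). One caution on your key step: the paper packages the weighted time integral as Lemma \ref{betamod}, whose bound $e^{-\min\{a,b\}t}\,t^{1-x-y}B(1-x,1-y)$ follows from the pointwise absorption $e^{-a(t-\tau)}e^{-b\tau}\le e^{-\min\{a,b\}(t-\tau)}e^{-\min\{a,b\}\tau}=e^{-\min\{a,b\}t}$ (or the paper's change-of-variables case analysis), whereas a literal split at $\tau=t/2$ as you describe would only produce the halved rate $e^{-\min\{a,b\}t/2}$, which the weights $e^{\min\{\cdot,\tilde{\varsigma}\}t}$ in (\ref{spaceYexp}) would not survive --- so implement that step by the pointwise factorization, which your own inequalities $\min\{\cdot,\tilde{\varsigma}\}\le\tilde{\varsigma}$, $\min\{\cdot,\tilde{\varsigma}\}\le\rho_j$ already supply.
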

\begin{remark}
With slight modifications on the proofs and range of the exponents $p$, $q$ and $r$, Theorem \ref{theo2} continues true if we consider $\nabla\phi$ in the class $t^{\frac{1}{2}-\frac{N}{2s}} \nabla\phi \in BC(\left[0,\infty\right);L^{s})$, for $s\geq N$.
\end{remark}

\section{Proof of Theorems \ref{theo1} and \ref{theo2}}
In this section we develop the proofs of the results stated in Section 2.

\begin{proof}[Proof of Theorem \ref{theo1}]

First we estimate each term in the integral system (\ref{mapIE}) in the norm of the functional space $\mathcal{Y}.$\newline
\\
{\underline{Estimates for $n$}}\newline
From Lemma \ref{heat_kernel2z}, we get

\begin{equation}\label{no1}
\left\Vert e^{\varsigma t} e^{t \Delta }n_{0}\right\Vert _{q}\leq Ce^{\varsigma t}t^{-\frac{N}{2}(\frac{2}{N}-\frac{1}{q})}\left\Vert n_{0}\right\Vert _{\frac{N}{2}}\!.
\end{equation}
On the other hand, by the conditions on $p$ and $q$, we have $\frac{1}{p}+\frac{1}{q}\leq 1,$ $\frac{1}{2}-\frac{N}{2p}>0,$ $\frac{N}{2}(\frac{1}{p}+\frac{1}{q})-\frac{1}{2}>0,$ $1-\frac{N}{2q}>0$ and $\frac{N}{q}-1>0$. Thus, it follows from Lemmas \ref{heat_kernel2} and \ref{heat_kernel2z} that

\begin{eqnarray}\label{no2}
&&\left\Vert \int_{0}^{t} e^{\varsigma (t-\tau)} e^{(t-\tau) \Delta}(u\cdot\nabla n+\mu n^{2})(\tau)d\tau\right\Vert _{q} \nonumber \\
&& \ \ \  \leq\int_{0}^{t}\left\Vert e^{\varsigma (t-\tau)} e^{(t-\tau) \Delta}(\nabla\cdot(nu)+\mu n^{2})(\tau)\right\Vert _{q}d\tau \nonumber \\
&&\ \ \ \leq Ce^{\varsigma T}\int_{0}^{t}(t-\tau)^{-\frac{N}{2p}-\frac{1}{2}}\left\Vert n(\tau)\right\Vert _{q}\left\Vert u(\tau)\right\Vert _{p}d\tau \nonumber \\
&& \ \ \ \ \ \ +Ce^{\varsigma T}\int_{0}^{t}(t-\tau)^{-\frac{N}{2q}}\left\Vert n(\tau)\right\Vert _{q}\left\Vert n(\tau)\right\Vert _{q}d\tau \nonumber \\
&&\ \ \  \leq C e^{\varsigma T} t^{-\frac{N}{2}(\frac{2}{N}-\frac{1}{q})} \left(\sup_{0<\tau<T}\tau^{\frac{N}{2}(\frac{2}{N}-\frac{1}{q})}\left\Vert n(\tau)\right\Vert _{q}\right)\left(\sup_{0<\tau<T}\tau^{\frac{N}{2}(\frac{1}{N}-\frac{1}{p})}\left\Vert u(\tau)\right\Vert _{p} \right) \nonumber \\
&&\ \ \ \ \ \ \times B\left (\frac{1}{2}-\frac{N}{2p},\frac{N}{2}(\frac{1}{p}+\frac{1}{q})-\frac{1}{2}\right)\nonumber \\
&&\ \ \ \ \ \ +C e^{ \varsigma T}t^{-\frac{N}{2}(\frac{2}{N}-\frac{1}{q})} \left (\sup_{0<\tau<T}\tau^{\frac{N}{2}(\frac{2}{N}-\frac{1}{q})}\left\Vert n(\tau)\right\Vert _{q}\right)^{2}\times B\left(1-\frac{N}{2q},\frac{N}{q}-1\right)\!,
\end{eqnarray}
\noindent where $C=C(N,p,q)>0$ and $B(\cdot,\cdot)$ denotes the beta function. Moreover, since $\frac{1}{q}+\frac{1}{r}\leq 1,$ $\frac{1}{2}-\frac{N}{2r}>0$ and $\frac{N}{2}(\frac{1}{q}+\frac{1}{r}-\frac{1}{N})>0$, we obtain from Lemma \ref{heat_kernel2} that

\begin{eqnarray}\label{no3}
&&\left\Vert \int_{0}^{t}e^{\varsigma(t-\tau)}e^{(t-\tau)\Delta} \left(\vphantom{n^{(k)}} \nabla\cdot(\chi n\nabla c-\xi n\nabla v) \right) (\tau)d\tau\right\Vert _{q}\nonumber \\
&&\ \ \  \leq Ce^{\varsigma T}\int_{0}^{t}(t-\tau)^{-\frac{N}{2r}-\frac{1}{2}}\left\Vert n(\tau)\right\Vert _{q}(\left\Vert \nabla c(\tau)\right\Vert _{r}+\left\Vert \nabla v(\tau)\right\Vert _{r})d\tau \nonumber \\
&&\ \ \ \leq C e^{\varsigma T} t^{-\frac{N}{2}(\frac{2}{N}-\frac{1}{q})} \left(\sup_{0<\tau<T}\tau^{\frac{N}{2}(\frac{2}{N}-\frac{1}{q})} \left\Vert n(\tau)\right\Vert _{q} \right)\nonumber \\
&&\ \ \ \ \ \ \times \left( \sup_{0<\tau<T}\tau^{\frac{N}{2}(\frac{1}{N}-\frac{1}{r})}\left\Vert \nabla c(\tau)\right\Vert _{r}+\sup_{0<\tau<T}\tau^{\frac{N}{2}(\frac{1}{N}-\frac{1}{r})}\left\Vert \nabla v(\tau)\right\Vert _{r}\right)\nonumber \\
&&\ \ \ \ \ \ \times B\left(\frac{1}{2}-\frac{N}{2r},\frac{N}{2}(\frac{1}{q}+\frac{1}{r}-\frac{1}{N})\right)\!.
\end{eqnarray}
Using (\ref{no1}), (\ref{no2}) and (\ref{no3}), we get

\begin{equation}\label{no4}
\left\|n\right\|_{\mathcal{Y}_1} \leq Ce^{\varsigma T} \left( \left\|n_0\right\|_{\mathcal{X}_1}+\left\|n\right\|_{\mathcal{Y}_1}\left\|u\right\|_{\mathcal{Y}_4}+\left\|n\right\|_{\mathcal{Y}_1}\left\|n\right\|_{\mathcal{Y}_1}+\left\|n\right\|_{\mathcal{Y}_1}\left\| c \right\|_{\mathcal{Y}_2}+\left\|n\right\|_{\mathcal{Y}_1}\left\|  v \right\|_{\mathcal{Y}_3} \right).
\end{equation}
{\underline{Estimates for $c$}}\newline
From Lemma \ref{heat_kernel2z}, it follows that
\begin{equation}\label{co1}
\left\Vert e^{-\kappa_1 \beta_1 t}e^{t\Delta}c_{0}\right\Vert _{\infty}\leq C \left\Vert c_{0}\right\Vert _{\infty}\!.
\end{equation}
By the assumptions on $p,q,r$, first notice that $\frac{1}{p}+\frac{1}{r}\leq 1,$ $\frac{1}{2}-\frac{N}{2p}>0$ and $\frac{1}{2}-\frac{N}{2}(\frac{1}{q}-\frac{1}{r})>0$. Thus, we can estimate
\begin{eqnarray}\label{co1a}
&&\left\Vert \int_{0}^{t}e^{-\kappa_1 \beta_1(t-\tau)}e^{(t-\tau)\Delta}(u\cdot\nabla c-\kappa_1 \alpha_1 n + \kappa_2 \gamma cn)d\tau\right\Vert _{\infty} \nonumber \\
&&\ \ \ \leq C \int_{0}^{t}(t-\tau)^{-\frac{N}{2p}-\frac{1}{2}}\left\Vert u(\tau)\right\Vert _{p}\left\Vert c(\tau)\right\Vert _{\infty}d\tau \nonumber \\
&&\ \ \ \ \ \ +C \int_{0}^{t}(t-\tau)^{-\frac{N}{2q}}\left\Vert n(\tau)\right\Vert _{q}\left(1+ \left\Vert c(\tau)\right\Vert _{\infty} \right)(\tau)d\tau \nonumber \\
&&\ \ \ \leq C  \left(\sup_{0<\tau<T}\tau^{\frac{N}{2}(\frac{1}{N}-\frac{1}{p})}\left\Vert u(\tau)\right\Vert _{p} \right) \left( \sup_{0<\tau<T}\left\Vert c(\tau)\right\Vert _{\infty} \right) B\left(\frac{1}{2}-\frac{N}{2p},\frac{1}{2}+\frac{N}{2p}\right)\nonumber \\
&&\ \ \ \ \ \ + C \left( \sup_{0<\tau<T}\tau^{\frac{N}{2}(\frac{2}{N}-\frac{1}{q})}\left\Vert n(\tau)\right\Vert _{q}\right)\left(1+\sup_{0<\tau<T}\left\Vert c(\tau)\right\Vert _{\infty} \right)B\left (1-\frac{N}{2q},\frac{N}{2q}\right)\!,
\end{eqnarray}
and
\begin{eqnarray}\label{co2}
&&\left\Vert \nabla\int_{0}^{t}e^{-\kappa_1 \beta_1(t-\tau)}e^{(t-\tau)\Delta}(u\cdot\nabla c-\kappa_1 \alpha_1 n + \kappa_2 \gamma c n)(\tau)d\tau\right\Vert _{r} \nonumber \\
&&\ \ \ \leq C  \int_{0}^{t}(t-\tau)^{-\frac{N}{2p}-\frac{1}{2}}\left\Vert u(\tau)\right\Vert _{p}\left\Vert \nabla c(\tau)\right\Vert _{r}d\tau \nonumber \\
&&\ \ \ \ \ \ +C  \int_{0}^{t}(t-\tau)^{-\frac{N}{2}(\frac{1}{q}-\frac{1}{r})-\frac{1}{2}}\left\Vert n(\tau)\right\Vert _{q}\left(1+ \left\Vert c(\tau)\right\Vert _{\infty} \right)d\tau \nonumber \\
&&\ \ \ \leq C  \left(\sup_{0<\tau<T}\tau^{\frac{N}{2}(\frac{1}{N}-\frac{1}{p})}\left\Vert u(\tau)\right\Vert _{p}\right)\nonumber \\
&&\ \ \ \ \ \ \times \left(\sup_{0<\tau<T}\tau^{\frac{N}{2}(\frac{1}{N}-\frac{1}{r})}\left\Vert \nabla c(\tau)\right\Vert _{q} \right) B\left(\frac{1}{2}-\frac{N}{2p},\frac{N}{2}(\frac{1}{p}+\frac{1}{r})\right)t^{-\frac{N}{2}(\frac{1}{N}-\frac{1}{r})} \nonumber \\
&&\ \ \ \ \ \ +C  \left(\sup_{0<\tau<T}\tau^{\frac{N}{2}(\frac{2}{N}-\frac{1}{q})}\left\Vert n(\tau)\right\Vert _{q} \right)\left(1+\sup_{0<\tau<T}\left\Vert c(\tau)\right\Vert _{\infty} \right)\nonumber \\
&&\ \ \ \ \ \ \times B\left (\frac{1}{2}-\frac{N}{2}(\frac{1}{q}-\frac{1}{r}),\frac{N}{2q}\right)t^{-\frac{N}{2}(\frac{1}{N}-\frac{1}{r})}.
\end{eqnarray}
Using (\ref{co1}), (\ref{co1a}), (\ref{co2}) and taking into account that ${\displaystyle \sup_{0<t<T} t^{\frac{N}{2}(\frac{1}{N}-\frac{1}{r})} \left\Vert \nabla e^{t\Delta}c_{0}\right\Vert _{r}< \infty },$ we obtain

\begin{equation}\label{co4}
\left\|c\right\|_{\mathcal{Y}_2} \leq C  \left( \left\|c_0\right\|_{\mathcal{X}_2}+\left\|c\right\|_{\mathcal{Y}_2}\left\|u\right\|_{\mathcal{Y}_4}+\left\|n\right\|_{\mathcal{Y}_1}\left\|c\right\|_{\mathcal{Y}_2}+\left\|n\right\|_{\mathcal{Y}_1} \right)\!.
\end{equation}

\noindent{\underline{Estimates for $v$}}\newline
Proceeding as in the proof of the estimates for $c$, we also arrive at

\begin{equation}\label{vo4}
\left\|v\right\|_{\mathcal{Y}_3} \leq C \left( \left\|v_0\right\|_{\mathcal{X}_3}+\left\|v\right\|_{\mathcal{Y}_3}\left\|u\right\|_{\mathcal{Y}_4}+\left\|n\right\|_{\mathcal{Y}_1} \right)\!.
\end{equation}

\noindent{\underline{Estimates for $u$}}\newline
First, from Lemma \ref{stokes_kernel} we get
\begin{equation}\label{uo1}
\left\Vert e^{-tA}u_{0}\right\Vert _{p} \leq C  t^{-\frac{N}{2}(\frac{1}{N}-\frac{1}{p})}\left\Vert u_{0}\right\Vert _{N}\!.
\end{equation}
Notice that we have $\frac{1}{s}+\frac{1}{q}\leq 1,$ $1-\frac{N}{2}(\frac{1}{q}+\frac{1}{s}-\frac{1}{p})>0$ and $\frac{N}{2q}+\frac{N}{2s}-\frac{1}{2}>0$. Thus, we can estimate
\begin{eqnarray}
&&\left\Vert \int_{0}^{t}e^{-(t-\tau)A} \mathbb{P} (u\cdot\nabla u+n \nabla\phi)(\tau)d\tau\right\Vert _{p}\nonumber \\
&&\ \ \ \leq\int_{0}^{t}\left\Vert \mathbb{P}  e^{-(t-\tau)A}\nabla\cdot(u\otimes u)(\tau)\right\Vert _{p}d\tau+\int_{0}^{t}\left\Vert \mathbb{P} e^{-(t-\tau)A}(n \nabla\phi )(\tau)\right\Vert _{p}d\tau \nonumber \\
&&\ \ \ \leq C \int_{0}^{t}(t-\tau)^{-\frac{N}{2}(\frac{2}{p}-\frac{1}{p})-\frac{1}{2}}\left\Vert (u\otimes u)(\tau)\right\Vert _{L^{\frac{p}{2}}}d\tau\nonumber \\
&&\ \ \ \ \ \ + C \int_{0}^{t}(t-\tau)^{-\frac{N}{2}(\frac{1}{q}+\frac{1}{s}-\frac{1}{p})}\left\Vert n(\tau)\right\Vert _{q}\left\Vert \nabla\phi (\tau)\right\Vert _{s}d\tau \nonumber \\
&&\ \ \ \leq C  t^{-\frac{N}{2}(\frac{1}{N}-\frac{1}{p})}\left(\sup_{0<\tau<T}\tau^{\frac{N}{2}(\frac{1}{N}-\frac{1}{p})}\left\Vert u( \tau )\right\Vert _{p}\right)^{2}\times B\left(\frac{1}{2}-\frac{N}{2p},\frac{N}{p}\right) \nonumber \\
&&\ \ \ \ \ \ +C  t^{-\frac{N}{2}(\frac{1}{N}-\frac{1}{p})} \left( \sup_{0<\tau<T}\tau^{\frac{1}{2}-\frac{N}{2s}} \left\Vert \nabla\phi ( \tau) \right\Vert _{s} \right)\left(\sup_{0<\tau<T}\tau^{\frac{N}{2}(\frac{2}{N}-\frac{1}{q})}\left\Vert n( \tau )\right\Vert _{q}\right)\nonumber\\
&&\ \ \ \ \ \ \times B\left(1-\frac{N}{2}(\frac{1}{q}+\frac{1}{s}-\frac{1}{p}),\frac{N}{2q}+\frac{N}{2s}-\frac{1}{2}\right)\!.\label{uo2} \nonumber \\
\end{eqnarray}
Recall that in (\ref{uo2}) we can choose an arbitrary representative of $[n(\tau)]$ (see Definition \ref{defimild}). In particular, for a.e. $\tau\in(0,T)$, $n(\tau)$ can be taken as the representative $n^*(\tau)$ that satisfies $\Vert n^*(\tau)\Vert_q=\Vert  [n(\tau)]\Vert_{q/\sim}$. Therefore,
\begin{eqnarray}
\label{uo2a} \sup_{0<\tau<T}\tau^{\frac{N}{2}(\frac{2}{N}-\frac{1}{q})}\left\Vert n( \tau )\right\Vert _{q}=\sup_{0<\tau<T}\tau^{\frac{N}{2}(\frac{2}{N}-\frac{1}{q})}\left\Vert [n( \tau )]\right\Vert _{q/\sim}.
\end{eqnarray}
Hence, from (\ref{uo1}), (\ref{uo2}) and (\ref{uo2a}), it follows that

\begin{equation}\label{uo4}
\left\|u\right\|_{\mathcal{Y}_4} \leq C  \left( \left\|u_0\right\|_{\mathcal{X}_4}+\left\|u\right\|_{\mathcal{Y}_4}\left\|u\right\|_{\mathcal{Y}_4}+\left\|n\right\|_{\mathcal{Y}_1} \right)\!.
\end{equation}
\noindent Using (\ref{no4}), (\ref{co4}), (\ref{vo4}) and (\ref{uo4}), we obtain the following estimates for the vector $[n,c,v,u]$:
\begin{eqnarray} \label{global}
&&\left\|n\right\|_{\mathcal{Y}_1} \leq C e^{\varsigma T} \left( \left\|n_0\right\|_{\mathcal{X}_1}+\left\|n\right\|_{\mathcal{Y}_1}\left\|u\right\|_{\mathcal{Y}_4}+\left\|n\right\|_{\mathcal{Y}_1}\left\|n\right\|_{\mathcal{Y}_1}+\left\|n\right\|_{\mathcal{Y}_1}\left\|  c \right\|_{\mathcal{Y}_2}+\left\|n\right\|_{\mathcal{Y}_1}\left\| v \right\|_{\mathcal{Y}_3} \right)\!, \nonumber \\
&&\left\|c\right\|_{\mathcal{Y}_2} \leq C \left( \left\|c_0\right\|_{\mathcal{X}_2}+\left\|c\right\|_{\mathcal{Y}_2}\left\|u\right\|_{\mathcal{Y}_4}+\left\|n\right\|_{\mathcal{Y}_1}\left\|c\right\|_{\mathcal{Y}_2}+\left\|n\right\|_{\mathcal{Y}_1} \right)\!, \nonumber \\
&&\left\|v\right\|_{\mathcal{Y}_3} \leq C  \left( \left\|v_0\right\|_{\mathcal{X}_3}+\left\|v\right\|_{\mathcal{Y}_3}\left\|u\right\|_{\mathcal{Y}_4}+\left\|n\right\|_{\mathcal{Y}_1} \right)\!, \nonumber \\
&&\left\|u\right\|_{\mathcal{Y}_4} \leq C  \left( \left\|u_0\right\|_{\mathcal{X}_4}+\left\|u\right\|_{\mathcal{Y}_4}\left\|u\right\|_{\mathcal{Y}_4}+\left\|n\right\|_{\mathcal{Y}_1} \right)\!.
\end{eqnarray}
Now, motivated by \cite{choe2017global}, we will consider the following iteration scheme whose limit will provide the global mild solution:
\begin{equation}\label{aproxIE}
\left\{
\begin{array}{lc}
n^{(1)}=e^{\varsigma t}e^{t\Delta}n_{0}, \quad  c^{(1)}=e^{-\kappa_1 \beta_1 t}e^{t\Delta}c_{0}, \quad v^{(1)}=e^{-\beta_2 t}e^{t\Delta}v_{0}, \quad u^{(1)}=e^{-t A}u_{0}, \\[.5cm]
n^{(k+1)}=n^{(1)}-{\displaystyle \int_{0}^{t}e^{\varsigma (t-\tau)}e^{(t-\tau)\Delta}(u^{(k)}\cdot\nabla n^{(k)}+\mu n^{(k)}n^{(k)})(\tau)d\tau}\\[.5cm]
\tab-{\displaystyle \int_{0}^{t} e^{\varsigma (t-\tau)}e^{(t-\tau)\Delta} \left(\nabla\cdot (\chi n^{(k)} \nabla c^{(k)}-\xi n^{(k)} \nabla v^{(k)}) \right) (\tau)d\tau} ,\\[.5cm]
c^{(k+1)}=c^{(1)}-{\displaystyle \int_{0}^{t}e^{-\kappa_1 \beta_1(t-\tau)}e^{(t-\tau)\Delta}(u^{(k)}\cdot\nabla c^{(k)}-\kappa_1 \alpha_1 n^{(k+1)} + \kappa_2 \gamma c^{(k)}n^{(k)})(\tau)d\tau} , \\[.5cm]
v^{(k+1)}=v^{(1)}-{\displaystyle\int_{0}^{t}e^{-\beta_2(t-\tau)}e^{(t-\tau)\Delta}(u^{(k)}\cdot\nabla v^{(k)}-\alpha_2 n^{(k+1)})(\tau)d\tau} , \\[.5cm]
u^{(k+1)}=u^{(1)}-{\displaystyle \int_{0}^{t} e^{-(t-\tau) A} \mathbb{P} (u^{(k)}\cdot\nabla u^{(k)}+n^{(k+1)}\nabla\phi)(\tau)d\tau}.
\end{array}
\right.
\end{equation}
As in (\ref{uo2}), in the last equation of (\ref{aproxIE}) the term $n^{(k+1)}$ is indeed the representative $(n^{(k+1)})^*$ of $[n^{(k+1)}]$. Thus, applying estimates in (\ref{global}) to (\ref{aproxIE}), we obtain the following ones:

\begin{eqnarray} \label{global01}
&&\|n^{(k+1)}\|_{\mathcal{Y}_1} \leq C e^{\varsigma T} \left( \|n_0\|_{\mathcal{X}_1}+\|n^{(k)}\|_{\mathcal{Y}_1}\|u^{(k)}\|_{\mathcal{Y}_4}+\|n^{(k)}\|_{\mathcal{Y}_1}\|n^{(k)}\|_{\mathcal{Y}_1} \right.\nonumber \\
&&\tab[2cm] \left. +\|n^{(k)}\|_{\mathcal{Y}_1}\|  c^{(k)} \|_{\mathcal{Y}_2}+\|n^{(k)}\|_{\mathcal{Y}_1}\|  v^{(k)} \|_{\mathcal{Y}_3} \right)\!, \nonumber \\
&&\|c^{(k+1)}\|_{\mathcal{Y}_2} \leq C \left( \|c_0\|_{\mathcal{X}_2}+\|c^{(k)}\|_{\mathcal{Y}_2}\|u^{(k)}\|_{\mathcal{Y}_4}+\|n^{(k)}\|_{\mathcal{Y}_1}\|c^{(k)}\|_{\mathcal{Y}_2}+\|n^{(k+1)}\|_{\mathcal{Y}_1}\right)\nonumber \\
&&\tab[1.6cm]\leq C \left( \|c_0\|_{\mathcal{X}_2}+\|n_0\|_{\mathcal{X}_1}+\|c^{(k)}\|_{\mathcal{Y}_2}\|u^{(k)}\|_{\mathcal{Y}_4}+\|n^{(k)}\|_{\mathcal{Y}_1}\|c^{(k)}\|_{\mathcal{Y}_2} \right.\nonumber \\
&&\tab[2cm] \left. +\|n^{(k)}\|_{\mathcal{Y}_1}\|u^{(k)}\|_{\mathcal{Y}_4} +\|n^{(k)}\|_{\mathcal{Y}_1}\|n^{(k)}\|_{\mathcal{Y}_1}+\|n^{(k)}\|_{\mathcal{Y}_1}\|  c^{(k)} \|_{\mathcal{Y}_2}+\|n^{(k)}\|_{\mathcal{Y}_1}\|  v^{(k)} \|_{\mathcal{Y}_3} \right)\!, \nonumber \\
&&\|v^{(k+1)}\|_{\mathcal{Y}_3} \leq C \left( \|v_0\|_{\mathcal{X}_3}+\|v^{(k)}\|_{\mathcal{Y}_3}\|u^{(k)}\|_{\mathcal{Y}_4}+ \|n^{(k+1)}\|_{\mathcal{Y}_1} \right) \nonumber \\
&&\tab[1.6cm] \leq C \left( \|v_0\|_{\mathcal{X}_3}+\|n_0\|_{\mathcal{X}_1}+\|v^{(k)}\|_{\mathcal{Y}_3}\|u^{(k)}\|_{\mathcal{Y}_4}+ \|n^{(k)}\|_{\mathcal{Y}_1}\|u^{(k)}\|_{\mathcal{Y}_4} \right. \nonumber \\
&&\tab[2cm] \left. +\|n^{(k)}\|_{\mathcal{Y}_1}\|n^{(k)}\|_{\mathcal{Y}_1} +\|n^{(k)}\|_{\mathcal{Y}_1}\|  c^{(k)} \|_{\mathcal{Y}_2}+\|n^{(k)}\|_{\mathcal{Y}_1}\|  v^{(k)} \|_{\mathcal{Y}_3} \right)\!, \nonumber \\
&&\|u^{(k+1)}\|_{\mathcal{Y}_4} \leq C \left( \|u_0\|_{\mathcal{X}_4}+\|u^{(k)}\|_{\mathcal{Y}_4}\|u^{(k)}\|_{\mathcal{Y}_4} +\|n^{(k+1)}\|_{\mathcal{Y}_1}\right) \nonumber \\
&&\tab[1.6cm] \leq C \left( \|u_0\|_{\mathcal{X}_4}+\|n_0\|_{\mathcal{X}_1}+\|u^{(k)}\|_{\mathcal{Y}_4}\|u^{(k)}\|_{\mathcal{Y}_4} +\|n^{(k)}\|_{\mathcal{Y}_1}\|u^{(k)}\|_{\mathcal{Y}_4}\right.\nonumber \\
&&\tab[2cm] \left. +\|n^{(k)}\|_{\mathcal{Y}_1}\|n^{(k)}\|_{\mathcal{Y}_1} +\|n^{(k)}\|_{\mathcal{Y}_1}\|  c^{(k)} \|_{\mathcal{Y}_2}+\|n^{(k)}\|_{\mathcal{Y}_1}\|  v^{(k)} \|_{\mathcal{Y}_3} \right)\!.
\end{eqnarray}
For small initial data, the sequence $\left[n^{(k)}, c^{(k)}, v^{(k)}, u^{(k)} \right]$ is uniformly bounded in the space $\mathcal{Y}.$ In fact, suppose that
\begin{equation}\label{bola}
\left\|[n^{(k)}, c^{(k)}, v^{(k)}, u^{(k)}] \right\|_{\mathcal{Y}} \leq R.
\end{equation}
Then from (\ref{global01}), it is straightforward to get

\begin{equation*}\label{bola01}
\left\|[n^{(k+1)}, c^{(k+1)}, v^{(k+1)}, u^{(k+1)}] \right\|_{\mathcal{Y}} \leq C(X_0 + 20 R^2),
\end{equation*}
where $X_0=4\|n_0\|_{\mathcal{X}_1}+\|c_0\|_{\mathcal{X}_2}+\|v_0\|_{\mathcal{X}_3}+\|u_0\|_{\mathcal{X}_4}$ and $C$ is a positive constant. Then, for $X_0$ small enough, we can consider the smallest number $R$, namely
\begin{equation*}\label{radio}
R=\frac{1-\sqrt{1-80X_0C^2}}{40C}>0,
\end{equation*}
such that
\begin{equation*}\label{radio01}
C(X_0 + 20 R^2)=R.
\end{equation*}
Thus, the sequence $\left[n^{(k)}, c^{(k)}, v^{(k)}, u^{(k)} \right],\ k\in\mathbb{N},$ is uniformly bounded in $\mathcal{Y}$. Next, let us consider the difference
$$\left[n^{(k+1)}-n^{(k)}, c^{(k+1)}-c^{(k)}, v^{(k+1)}-v^{(k)}, u^{(k+1)}-u^{(k)} \right]\!.$$
For the first component $n^{(k+1)}-n^{(k)},$ we have

\begin{equation*}\label{restoIEn}
\left.
\begin{array}{lc}
n^{(k+1)}-n^{(k)}={\displaystyle \int_{0}^{t}e^{\varsigma (t-\tau)}e^{(t-\tau)\Delta} \left(u^{(k-1)}\cdot\nabla n^{(k-1)}-u^{(k)}\cdot\nabla n^{(k)} \right) (\tau) d\tau}\\[.5cm]
\tab[2.4cm] +{\displaystyle \int_{0}^{t}e^{\varsigma (t-\tau)}e^{(t-\tau)\Delta} \left( \mu n^{(k-1)}n^{(k-1)}-\mu n^{(k)}n^{(k)}\right)(\tau)d\tau}\\[.5cm]
\tab[2.4cm] +{\displaystyle \int_{0}^{t} e^{\varsigma (t-\tau)}e^{(t-\tau)\Delta} \left( \nabla\cdot (\chi n^{(k-1)} \nabla c^{(k-1)}-\chi n^{(k)} \nabla c^{(k)}) \right)(\tau)d\tau} \\[.5cm]
\tab[2.4cm] - {\displaystyle \int_{0}^{t} e^{\varsigma (t-\tau)}e^{(t-\tau)\Delta}\nabla\cdot \left(\xi n^{(k-1)} \nabla v^{(k-1)}-\xi n^{(k)} \nabla v^{(k)} \right)(\tau)d\tau}.
\end{array}
\right.
\end{equation*}
Using (\ref{no2}) and (\ref{no3}), we get
\begin{eqnarray}
&&\| n^{(k+1)}-n^{(k)}\|_{\mathcal{Y}_1} \leq C \left( \|n^{(k-1)}\|_{\mathcal{Y}_1}\|u^{(k-1)}-u^{(k)}\|_{\mathcal{Y}_4}+\|n^{(k-1)}-n^{(k)}\|_{\mathcal{Y}_1}\|u^{(k)}\|_{\mathcal{Y}_4} \right. \nonumber\\
&&\tab[2.9cm] \ \ \ +\|n^{(k-1)}\|_{\mathcal{Y}_1}\|n^{(k-1)}-n^{(k)}\|_{\mathcal{Y}_1} +\|n^{(k-1)}-n^{(k)}\|_{\mathcal{Y}_1}\|n^{(k)}\|_{\mathcal{Y}_1}\nonumber\\ [.3cm]
&&\tab[2.9cm] \ \ \ +\|n^{(k-1)}\|_{\mathcal{Y}_1}\|  c^{(k-1)} - c^{(k)} \|_{\mathcal{Y}_2} +\|n^{(k-1)}-n^{(k)}\|_{\mathcal{Y}_1}\|  c^{(k)} \|_{\mathcal{Y}_2}\nonumber \\ [.3cm]
&&\tab[2.9cm]\ \ \  \left. +\|n^{(k-1)}\|_{\mathcal{Y}_1}\|  v^{(k-1)} - v^{(k)} \|_{\mathcal{Y}_3} +\|n^{(k-1)}-n^{(k)}\|_{\mathcal{Y}_1}\|  v^{(k)} \|_{\mathcal{Y}_3} \right)\nonumber\\ [.2cm]
&&\tab[2.4cm] \ \ \ \leq CR \|[n^{(k-1)}-n^{(k)} \text{,}\ c^{(k)}-c^{(k-1)} \text{,}\ v^{(k-1)}-v^{(k)} \text{,}\ u^{(k-1)}-u^{(k)}] \|_{\mathcal{Y}}.
\label{resto0n04}
\end{eqnarray}
For the component $c^{(k+1)}-c^{(k)},$ we have

\begin{equation*}\label{restoIEc}
\left.
\begin{array}{lc}
c^{(k+1)}-c^{(k)}={\displaystyle \int_{0}^{t}e^{-\kappa_1 \beta_1(t-\tau)}e^{(t-\tau)\Delta}\left(u^{(k-1)}\cdot\nabla c^{(k-1)}-u^{(k)}\cdot\nabla c^{(k)}\right) (\tau) d\tau}\\[.5cm]
\tab[2.4cm] +{\displaystyle \int_{0}^{t}e^{-\kappa_1 \beta_1(t-\tau)}e^{(t-\tau)\Delta}\kappa_2 \gamma \left(  c^{(k-1)}n^{(k-1)}-c^{(k)}n^{(k)} \right)(\tau)d\tau}\\[.5cm]
\tab[2.4cm] +{\displaystyle \int_{0}^{t} e^{-\kappa_1 \beta_1(t-\tau)}e^{(t-\tau)\Delta} \kappa_1 \alpha_1 \left( n^{(k+1)}-n^{(k)} \right)(\tau)d\tau}. \\.
\end{array}
\right.
\end{equation*}
Using (\ref{co1a}), (\ref{co2}) and (\ref{resto0n04}), we get
\begin{eqnarray}\label{resto0c04}
&&\|c^{(k+1)}-c^{(k)}\|_{\mathcal{Y}_2} \leq C \left( \|c^{(k-1)}\|_{\mathcal{Y}_2}\|u^{(k-1)}-u^{(k)}\|_{\mathcal{Y}_4}+\|c^{(k-1)}-c^{(k)}\|_{\mathcal{Y}_2}\|u^{(k)}\|_{\mathcal{Y}_4} \right.\nonumber\\[.2cm]
&&\tab[2.9cm] \ \ \ \left. +\|n^{(k-1)}\|_{\mathcal{Y}_1}\|c^{(k-1)}-c^{(k)}\|_{\mathcal{Y}_2} +\|n^{(k-1)}-n^{(k)}\|_{\mathcal{Y}_1}\|c^{(k)}\|_{\mathcal{Y}_2} \right.\nonumber\\[.2cm]
&&\tab[2.9cm] \ \ \ \left.+\|n^{(k+1)}-n^{(k)}\|_{\mathcal{Y}_1} \right)\nonumber\\[.1cm]
&&\tab[2.5cm] \ \ \ \leq CR \| [n^{(k-1)}-n^{(k)} \text{,}\ c^{(k-1)}-c^{(k)} \text{,}\ v^{(k-1)}-v^{(k)} \text{,}\ u^{(k-1)}-u^{(k)}] \|_{\mathcal{Y}}\!.
\end{eqnarray}
Similarly, we arrive at

\begin{equation}
\left.
\begin{array}{lc}
\left\|v^{(k+1)}-v^{(k)}\right\|_{\mathcal{Y}_3} \leq CR \left\| [ n^{(k-1)}-n^{(k)} \text{,}\ c^{(k-1)}-c^{(k)} \text{,}\ v^{(k-1)}-v^{(k)} \text{,}\ u^{(k-1)}-u^{(k)} ] \right\|_{\mathcal{Y}}\!,\\[.5cm]
\left\|u^{(k+1)}-u^{(k)}\right\|_{\mathcal{Y}_4} \leq CR \left\|[ n^{(k-1)}-n^{(k)} \text{,}\ c^{(k-1)}-c^{(k)} \text{,}\ v^{(k-1)}-v^{(k)} \text{,}\ u^{(k-1)}-u^{(k)} ] \right\|_{\mathcal{Y}}\!.\label{resto02}
\end{array}
\right.
\end{equation}
Combining the estimates (\ref{resto0n04}), (\ref{resto0c04}) and (\ref{resto02}), the result is

\begin{equation}\label{resto03}
\left.
\begin{array}{lc}
\left\|[n^{(k+1)}-n^{(k)} \text{,}\ c^{(k+1)}-c^{(k)} \text{,}\ v^{(k+1)}-v^{(k)} \text{,}\ u^{(k+1)}-u^{(k)}] \right\|_{\mathcal{Y}}  \\[.5cm]
\leq 4CR \left\|[n^{(k-1)}-n^{(k)} \text{,}\ c^{(k-1)}-c^{(k)} \text{,}\ v^{(k-1)}-v^{(k)} \text{,}\ u^{(k-1)}-u^{(k)}] \right\|_{\mathcal{Y}}\!.
\end{array}
\right.
\end{equation}
Reducing $X_0$ (if necessary), we can take $R$ such that $R < \frac{1}{4C}$, and then the sequence $\left[n^{(k)}, c^{(k)}, v^{(k)}, u^{(k)} \right]\!,$ $k\in\mathbb{N},$
is a Cauchy sequence in $\mathcal{Y}$. Thus, the limit $\left[n, c, v, u \right]$ solves the equations (\ref{mapIE}) in $\mathcal{Y}$. Finally, we observe that estimates (\ref{bola}) and (\ref{resto03}), with slight modifications to consider two possible solutions $[n, c, v, u]$ and $[\tilde{n}, \tilde{c}, \tilde{v}, \tilde{u}]$, also assure the uniqueness of solutions in the closed ball $\{[n, c, v, u]\in\mathcal{Y};\left\|[n, c, v, u]\right\|_{\mathcal{Y}} \leq R\}$.
\end{proof}
\begin{remark}\label{rem1a}
The range of $q$ determines the conditions on the exponents $p$ and $r.$ The difference in the case $N=2$ and $N=3$ in the proof of Theorem \ref{theo1} is clarified in the following comments.
\begin{enumerate}
\item Notice that  if $s=N,$ the condition $\frac{1}{s}+\frac{1}{q}\leq 1$ to obtain (\ref{uo2}) implies that $N\neq 2.$ In fact, if $N=2$, then $q \geq 2$ which is incompatible with the condition $\frac{N}{q}-1>0$ given in (\ref{no2}).
\item In order to get (\ref{no2}) and (\ref{uo2}) we need $q>\frac{N}{2}$ and $q\geq \frac{s}{s-1},$ respectively. If $N=3,$ then $\frac{s}{s-1}\leq \frac{N}{2},$ and therefore we assume $q>\frac{N}{2}$ in condition $\textit{(}i\textit{)}$ in Theorem \ref{theo1}. On the other hand, if $N=2,$ then $\frac{N}{2}<\frac{s}{s-1},$ and therefore we assume $q\geq\frac{s}{s-1}$ in condition $\textit{(}ii\textit{)}$ in Theorem \ref{theo1}.
\item If $N=3,$ the condition $1>\frac{N}{2}(\frac{1}{q}+\frac{1}{s}-\frac{1}{p})$ (equivalently, $p<\frac{Nqs}{Ns+Nq-2sq}$) is necessary to obtain (\ref{uo2}). In the case $N=2,$ it is trivially satisfied for $p<\infty$ since $\frac{1}{s}+\frac{1}{q}\leq 1.$
\item For (\ref{no2}) and (\ref{co1a}) we need to assume $p>N$. For (\ref{no2}) we also need $\frac{q}{q-1}\leq p$. Taking into account the range for $q$, if $N=3$ we have $\frac{q}{q-1}<N$. Therefore, we assume $N<p$ in the condition $\textit{(}i\textit{)}$ of Theorem \ref{theo1}. If $N=2$ we have $N<\frac{q}{q-1}$ and thus we assume $\frac{q}{q-1}\leq p$ in the conditions $\textit{(}ii\textit{)}$ and $\textit{(}iii\textit{)}$ of Theorem \ref{theo1}.
\end{enumerate}
\end{remark}

\subsection{Proof of Theorem  \ref{theo2}}
In the proof of Theorem \ref{theo2} we will need a lemma related to the integrability of a beta type function. Although it seems to be more or less known, we have not been able to locate the exact statement and its proof in the literature. So, for reader's convenience, we include them here.
\begin{lemm} \label{betamod}
Let $x<1,$ $y<1$ and $a,b>0$. Then

\begin{equation}\label{betaz}
\int_{0}^t(t-\tau)^{-x} \tau^{-y}e^{-a(t-\tau)}e^{-b\tau} d\tau \leq e^{-\min{\{a,b\}}}t^{1-x-y}B(1-x,1-y).
\end{equation}
\end{lemm}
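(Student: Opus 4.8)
The plan is to decouple the integrand into its exponential factor and its power factor, bound the exponential uniformly over the interval of integration, and then recognize what remains as a rescaled Beta integral. No delicate analysis is required; the hypotheses $x<1$, $y<1$, $a,b>0$ are exactly what make each step go through.

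First I would estimate the product of exponentials. Writing the total exponent as $a(t-\tau)+b\tau$ and using that $\min\{a,b\}$ bounds both $a$ and $b$ from below, I get, for every $\tau\in[0,t]$,
\[
a(t-\tau)+b\tau \;\geq\; \min\{a,b\}\,(t-\tau)+\min\{a,b\}\,\tau \;=\; \min\{a,b\}\,t,
\]
so that $e^{-a(t-\tau)}e^{-b\tau}\leq e^{-\min\{a,b\}\,t}$ uniformly in $\tau$. Pulling this constant factor out of the integral reduces the problem to estimating the purely algebraic quantity $\int_0^t (t-\tau)^{-x}\tau^{-y}\,d\tau$.

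Next I would evaluate this remaining integral exactly via the scaling substitution $\tau = t s$, $s\in(0,1)$, which gives
\[
\int_0^t (t-\tau)^{-x}\tau^{-y}\,d\tau \;=\; t^{1-x-y}\int_0^1 (1-s)^{-x}s^{-y}\,ds \;=\; t^{1-x-y}\,B(1-x,1-y),
\]
where the convergence of the last integral at both endpoints $s=0$ and $s=1$ is guaranteed precisely by $y<1$ and $x<1$ (which render the exponents $-y$ and $-x$ strictly larger than $-1$), and the symmetry $B(1-y,1-x)=B(1-x,1-y)$ puts the arguments in the stated order. Combining this with the exponential bound from the first step yields the claimed inequality.

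There is no genuine obstacle here: the only point worth stating carefully is the uniform lower bound $a(t-\tau)+b\tau\geq\min\{a,b\}\,t$, which is what lets the exponential weights be pulled out without losing the decay; everything else is the elementary identification of the power integral with the Beta function under the hypotheses on $x$ and $y$.
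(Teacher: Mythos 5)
Your proof is correct, and it reaches the same bound as the paper by the same basic strategy (decouple the exponential decay at rate $\min\{a,b\}$ from the power factors, then identify the beta integral), but your route is genuinely more streamlined. The paper splits into the cases $b\geq a$ and $b<a$: in the first it factors $e^{-a(t-\tau)}e^{-b\tau}=e^{-at}e^{-(b-a)\tau}$ and drops $e^{-(b-a)\tau}\leq 1$; in the second it substitutes $\tau=t-\check\tau$ to reduce to the first case and then invokes the symmetry $B(1-y,1-x)=B(1-x,1-y)$. Your single observation $a(t-\tau)+b\tau\geq\min\{a,b\}\bigl((t-\tau)+\tau\bigr)=\min\{a,b\}\,t$ for $\tau\in[0,t]$ makes the case distinction, the change of variables, and the appeal to beta symmetry all unnecessary, at no loss of sharpness: pointwise on $[0,t]$ your exponential bound coincides with what the paper's case analysis yields. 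One point worth flagging: both your argument and the paper's actually prove the inequality with $e^{-\min\{a,b\}\,t}$ on the right-hand side, whereas the statement as printed reads $e^{-\min\{a,b\}}$ without the factor $t$; this is evidently a typo in the statement (the $t$-dependent exponential is what is used later, e.g.\ to produce the decay $e^{-\tilde\varsigma t}$ in the estimates for Theorem \ref{theo2}), and your proof, like the paper's, establishes the intended $t$-dependent version. Note also that your scaling substitution gives the power integral as an exact identity, $\int_0^t(t-\tau)^{-x}\tau^{-y}\,d\tau=t^{1-x-y}B(1-x,1-y)$, where the paper writes an inequality; both are fine, yours is marginally more informative.
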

\begin{proof}
First assume that $b\geq a.$ Then we have
\begin{eqnarray*}
\int_{0}^t(t-\tau)^{-x} \tau^{-y}e^{-a(t-\tau)}e^{-b\tau} d\tau &= & e^{-at}\int_{0}^t(t-\tau)^{-x} \tau^{-y}e^{-(b-a)\tau} d\tau \\
&\leq & e^{-at}\int_{0}^t(t-\tau)^{-x} \tau^{-y} d\tau \\
&\leq & e^{-at}t^{1-x-y}B(1-x,1-y).
\end{eqnarray*}
On the other hand if $b<a$, making the change of variable $\tau=t-\check{\tau},$ we get
\begin{eqnarray*}
\int_{0}^t(t-\tau)^{-x} \tau^{-y}e^{-a(t-\tau)}e^{-b\tau} d\tau &= & \int_{0}^t(\check{\tau})^{-x} (t-\check{\tau})^{-y}e^{-a(\check{\tau})}e^{-b(t-\check{\tau})} d\check{\tau} \\
&\leq & e^{-bt}t^{1-x-y}B(1-y,1-x).
\end{eqnarray*}
Recalling that the beta function is symmetric, we get (\ref{betaz}).
\end{proof}
\begin{proof}[Proof of Theorem \ref{theo2}]

If in (\ref{KNS}) we change $\varsigma n-\mu n^2$ by $-\tilde{\varsigma} n$, then the term $e^{\varsigma t}$ in (\ref{mapIE}) is replaced by $e^{-\tilde{\varsigma} t}$. Thus, arguing in a similar way to the proof of Theorem \ref{theo1} and using Lemma \ref{betamod}, we can obtain the estimates below for $n$, $c$ and $u$.\\
{\underline{Estimates for $n$}}\newline
From Lemma \ref{heat_kernel2z}, we get

\begin{equation}\label{no1z}
\left\Vert e^{-\tilde{\varsigma} t} e^{t \Delta }n_{0}\right\Vert _{q}\leq Ce^{-\tilde{\varsigma} t} t^{-\frac{N}{2}(\frac{2}{N}-\frac{1}{q})}\left\Vert n_{0}\right\Vert _{\frac{N}{2}}\!.
\end{equation}
Since $\frac{1}{p}+\frac{1}{q}\leq 1$, $\frac{1}{2}-\frac{N}{2p}>0$ and $\frac{N}{2}(\frac{1}{p}+\frac{1}{q})-\frac{1}{2}>0$, we can employ Lemmas \ref{heat_kernel2} and \ref{betamod} in order to estimate

\begin{eqnarray}\label{no2z}
&&\left\Vert \int_{0}^{t}e^{-\tilde{\varsigma} (t-\tau)}e^{(t-\tau)\Delta}(u\cdot\nabla n)(\tau)d\tau\right\Vert _{q}=\left\Vert \int_{0}^{t}e^{-\tilde{\varsigma} (t-\tau)}e^{(t-\tau)\Delta}(\nabla\cdot(n u))(\tau)d\tau\right\Vert _{q}\nonumber\\
&&\ \ \ \leq C\int_{0}^{t}e^{-\tilde{\varsigma} (t-\tau)}(t-\tau)^{-\frac{N}{2p}-\frac{1}{2}}\left\Vert n(\tau)\right\Vert _{q}\left\Vert u(\tau)\right\Vert _{p}d\tau \nonumber \\
&&\ \ \ \leq C\int_{0}^{t}e^{-\tilde{\varsigma} (t-\tau)}e^{-(\tilde{\varsigma}+\rho_2)\tau}(t-\tau)^{-\frac{N}{2p}-\frac{1}{2}}\tau^{\frac{N}{2}(\frac{1}{p}+\frac{1}{q})-\frac{3}{2}} \tau^{\frac{N}{2}(\frac{2}{N}-\frac{1}{q})} e^{\tilde{\varsigma} \tau} \left\Vert n(\tau)\right\Vert _{q} \tau^{\frac{N}{2}(\frac{1}{N}-\frac{1}{p})}e^{\rho_2 t} \left\Vert u(\tau)\right\Vert _{p}d\tau \nonumber \\
&&\ \ \ \leq C e^{-\tilde{\varsigma} t} t^{-\frac{N}{2}(\frac{2}{N}-\frac{1}{q})} \left(\sup_{0<\tau<\infty}\tau^{\frac{N}{2}(\frac{2}{N}-\frac{1}{q})} e^{\tilde{\varsigma} \tau} \left\Vert n( \tau )\right\Vert _{q} \right) \left( \sup_{0<\tau<\infty}\tau^{\frac{N}{2}(\frac{1}{N}-\frac{1}{p})}e^{\rho_2 t} \left\Vert u( \tau )\right\Vert _{p} \right) \nonumber \\
&&\ \ \ \ \ \times B\left (\frac{1}{2}-\frac{N}{2p},\frac{N}{2}(\frac{1}{p}+\frac{1}{q})-\frac{1}{2}\right)\!, \nonumber \\
\end{eqnarray}
\noindent where $C=C(\Omega,p,q)>0$.

Next, notice that we have the following relations between $r$ and $q$: $\frac{1}{q}+\frac{1}{r}\leq 1$, $\frac{1}{2}-\frac{N}{2r}>0$ and $\frac{N}{2}(\frac{1}{q}+\frac{1}{r}-\frac{1}{N})>0$. Thus, using again Lemmas \ref{heat_kernel2} and \ref{betamod}, we obtain

\begin{eqnarray}\label{no3z}
&&\left\Vert \int_{0}^{t}e^{-\tilde{\varsigma}(t-\tau)}e^{(t-\tau)\Delta} \left(\vphantom{n^{(k)}} \nabla\cdot(\chi n\nabla c-\xi n\nabla v) \right) (\tau)d\tau\right\Vert _{q}\nonumber \\
&&\ \ \  \leq C\int_{0}^{t}e^{-\tilde{\varsigma}(t-\tau)}(t-\tau)^{-\frac{N}{2r}-\frac{1}{2}}\left\Vert n(\tau)\right\Vert _{q}(\left\Vert \nabla c(\tau)\right\Vert _{r}+\left\Vert \nabla v(\tau)\right\Vert _{r})d\tau \nonumber \\
&&\ \ \ \leq C e^{-\tilde{\varsigma} t} t^{-\frac{N}{2}(\frac{2}{N}-\frac{1}{q})} \left( \sup_{0<\tau<\infty}\tau^{\frac{N}{2}(\frac{2}{N}-\frac{1}{q})} e^{\tilde{\varsigma} \tau} \left\Vert n(\tau)\right\Vert _{q} \right) \nonumber \\
&&\ \ \ \ \ \ \times \left( \sup_{0<\tau<\infty}\tau^{\frac{N}{2}(\frac{1}{N}-\frac{1}{r})} \left\Vert \nabla c(\tau)\right\Vert _{r}+\sup_{0<\tau<\infty}\tau^{\frac{N}{2}(\frac{1}{N}-\frac{1}{r})} \left\Vert \nabla v(\tau)\right\Vert _{r}\right)\nonumber \\
&&\ \ \ \ \ \ \times B\left(\frac{1}{2}-\frac{N}{2r},\frac{N}{2}(\frac{1}{q}+\frac{1}{r}-\frac{1}{N})\right)\!.
\end{eqnarray}

Putting together the estimates (\ref{no1z}) to (\ref{no3z}), we get

\begin{equation}\label{no4z}
\left\|n\right\|_{\mathcal{Y}^{\tiny{\mbox{exp}}}_1} \leq \left( \left\|n_0\right\|_{\mathcal{X}_1}+\left\|n\right\|_{\mathcal{Y}^{\tiny{\mbox{exp}}}_1}\left\|u\right\|_{\mathcal{Y}^{\tiny{\mbox{exp}}}_4}+\left\|n\right\|_{\mathcal{Y}^{\tiny{\mbox{exp}}}_1}\left\| c \right\|_{\mathcal{Y}^{\tiny{\mbox{exp}}}_2}+\left\|n\right\|_{\mathcal{Y}^{\tiny{\mbox{exp}}}_1}\left\|  v \right\|_{\mathcal{Y}^{\tiny{\mbox{exp}}}_3} \right).
\end{equation}
{\underline{Estimates for $c$}}\newline
Firstly, Lemma \ref{heat_kernel2z} yields
\begin{equation}\label{co1z}
\left\Vert e^{-\kappa_1 \beta_1 t} e^{t\Delta}c_{0}\right\Vert _{\infty}\leq C e^{-\kappa_1 \beta_1 t}\left\Vert c_{0}\right\Vert _{\infty}\!.
\end{equation}
The assumptions on $p,q,r$ imply that $1-\frac{N}{2q},$ $\frac{1}{p}+\frac{1}{r}\leq 1,$ $\frac{1}{2}-\frac{N}{2p}>0$ and $\frac{1}{2}-\frac{N}{2}(\frac{1}{q}-\frac{1}{r})>0$. Then, using Lemmas \ref{heat_kernel2}, \ref{heat_kernel2z} and \ref{betamod}, we can estimate
\begin{eqnarray}\label{co1az}
&&\left\Vert \int_{0}^{t}e^{-\kappa_1 \beta_1(t-\tau)}e^{(t-\tau)\Delta}(u\cdot\nabla c-\kappa_1 \alpha_1 n + \kappa_2 \gamma c n)(\tau)d\tau\right\Vert _{\infty} \nonumber \\
&&\ \ \ \leq C \int_{0}^{t}e^{-\kappa_1 \beta_1(t-\tau)}(t-\tau)^{-\frac{N}{2p}-\frac{1}{2}}\left\Vert u(\tau)\right\Vert _{p}\left\Vert c(\tau)\right\Vert _{\infty} d\tau \nonumber \\
&&\ \ \ \ \ \ +C \int_{0}^{t}e^{-\kappa_1 \beta_1(t-\tau)}(t-\tau)^{-\frac{N}{2q}}\left\Vert n(\tau)\right\Vert _{q}\left(1+ \left\Vert c(\tau)\right\Vert _{\infty} \right) d\tau \nonumber \\
&&\ \ \ \leq C  e^{-\kappa_1 \beta_1 t} \left( \sup_{0<\tau<\infty}\tau^{\frac{N}{2}(\frac{1}{N}-\frac{1}{p})} e^{\rho_2 \tau }\left\Vert u(\tau)\right\Vert _{p} \right) \nonumber \\
&&\ \ \ \ \ \ \times \left(\sup_{0<\tau<\infty} e^{\kappa_1 \beta_1 \tau }\left\Vert c(\tau)\right\Vert _{\infty} \right) B\left(\frac{1}{2}-\frac{N}{2p},\frac{1}{2}+\frac{N}{2p}\right)\nonumber \\
&&\ \ \ \ \ \ + C e^{-\min \{\kappa_1 \beta_1,\tilde{\varsigma} \} t} \left( \sup_{0<\tau<\infty}\tau^{\frac{N}{2}(\frac{2}{N}-\frac{1}{q})} e^{\tilde{\varsigma} \tau } \left\Vert n(\tau)\right\Vert _{q} \right)B\left (1-\frac{N}{2q},\frac{N}{2q}\right) \\
&&\ \ \ \ \ \ + C e^{-\kappa_1 \beta_1 t} \left( \sup_{0<\tau<\infty}\tau^{\frac{N}{2}(\frac{2}{N}-\frac{1}{q})}e^{\tilde{\varsigma} \tau }\left\Vert n(\tau)\right\Vert _{q} \right) \left( \sup_{0<\tau<\infty} e^{\kappa_1 \beta_1 \tau}\left\Vert  c(\tau)\right\Vert _{\infty} \right) B\left (1-\frac{N}{2q},\frac{N}{2q}\right)\!, \nonumber
\end{eqnarray}
and
\begin{eqnarray}\label{co2z}
&&\left\Vert \nabla\int_{0}^{t}e^{-\kappa_1 \beta_1(t-\tau)}e^{(t-\tau)\Delta}(u\cdot\nabla c-\kappa_1 \alpha_1 n + \kappa_2 \gamma c n)(\tau)d\tau\right\Vert _{r} \nonumber \\
&&\ \ \ \leq C  \int_{0}^{t}e^{-\kappa_1 \beta_1(t-\tau)}(t-\tau)^{-\frac{N}{2p}-\frac{1}{2}}\left\Vert u(\tau)\right\Vert _{p}\left\Vert \nabla c(\tau)\right\Vert _{r}d\tau \nonumber \\
&&\ \ \ \ \ \ +C  \int_{0}^{t}e^{-\kappa_1 \beta_1(t-\tau)}(t-\tau)^{-\frac{N}{2}(\frac{1}{q}-\frac{1}{r})-\frac{1}{2}}\left\Vert n(\tau)\right\Vert _{q}\left(1+ \left\Vert c(\tau)\right\Vert _{\infty} \right)d\tau \nonumber \\
&&\ \ \ \leq C e^{-\min \{\kappa_1 \beta_1,\rho_2 \} t} t^{-\frac{N}{2}(\frac{1}{N}-\frac{1}{r})} \left(\sup_{0<\tau<\infty}\tau^{\frac{N}{2}(\frac{1}{N}-\frac{1}{p})} e^{\rho_2 \tau}\left\Vert u(\tau)\right\Vert _{p}\right)\nonumber \\
&&\ \ \ \ \ \ \times \left( \sup_{0<\tau<\infty}\tau^{\frac{N}{2}(\frac{1}{N}-\frac{1}{r})} \left\Vert \nabla c(\tau)\right\Vert _{q} \right) B\left(\frac{1}{2}-\frac{N}{2p},\frac{N}{2}(\frac{1}{p}+\frac{1}{r})\right) \nonumber \\
&&\ \ \ \ \ \ +C e^{-\min \{\kappa_1 \beta_1,\tilde{\varsigma} \} t} t^{-\frac{N}{2}(\frac{1}{N}-\frac{1}{r})} \left( \sup_{0<\tau<\infty}\tau^{\frac{N}{2}(\frac{2}{N}-\frac{1}{q})} e^{\tilde{\varsigma} \tau} \left\Vert n(\tau)\right\Vert _{q} \right) B\left (\frac{1}{2}-\frac{N}{2}(\frac{1}{q}-\frac{1}{r}),\frac{N}{2q}\right) \nonumber \\
&&\ \ \ \ \ \ +C e^{-\kappa_1 \beta_1 t}\left( \sup_{0<\tau<\infty}\tau^{\frac{N}{2}(\frac{2}{N}-\frac{1}{q})} e^{\tilde{\varsigma} \tau} \left\Vert n(\tau)\right\Vert _{q}\right) \left(\sup_{0<\tau<\infty} e^{\kappa_1 \beta_1 \tau}\left\Vert c(\tau)\right\Vert _{\infty} \right) \nonumber \\
&&\ \ \ \ \ \ \times B\left (\frac{1}{2}-\frac{N}{2}(\frac{1}{q}-\frac{1}{r}),\frac{N}{2q}\right)t^{-\frac{N}{2}(\frac{1}{N}-\frac{1}{r})}. \nonumber \\
\end{eqnarray}
 In view of (\ref{co1z}), (\ref{co1az}) and (\ref{co2z}), and recalling that ${\displaystyle \sup_{0<t<\infty} t^{\frac{N}{2}(\frac{1}{N}-\frac{1}{r})} \left\Vert \nabla e^{t\Delta}c_{0}\right\Vert _{r}< \infty }$, we obtain

\begin{equation}\label{co4z}
\left\|c\right\|_{\mathcal{Y}^{\tiny{\mbox{exp}}}_2} \leq C \left( \left\|c_0\right\|_{\mathcal{X}_2}+\left\|c\right\|_{\mathcal{Y}^{\tiny{\mbox{exp}}}_2}\left\|u\right\|_{\mathcal{Y}^{\tiny{\mbox{exp}}}_4}+\left\|n\right\|_{\mathcal{Y}^{\tiny{\mbox{exp}}}_1}\left\|c\right\|_{\mathcal{Y}^{\tiny{\mbox{exp}}}_2}+\left\|n\right\|_{\mathcal{Y}^{\tiny{\mbox{exp}}}_1} \right)\!.
\end{equation}

\noindent{\underline{Estimates for $v$}}\newline
Proceeding as above, we also can obtain the estimate below for $v$. The proof is left to the reader.

\begin{equation}\label{vo4z}
\left\|v\right\|_{\mathcal{Y}^{\tiny{\mbox{exp}}}_3} \leq C \left( \left\|v_0\right\|_{\mathcal{X}_3}+\left\|v\right\|_{\mathcal{Y}^{\tiny{\mbox{exp}}}_3}\left\|u\right\|_{\mathcal{Y}^{\tiny{\mbox{exp}}}_4}+\left\|n\right\|_{\mathcal{Y}^{\tiny{\mbox{exp}}}_1} \right)\!.
\end{equation}

\noindent{\underline{Estimates for $u$}}\newline
First, from Lemma \ref{stokes_kernel} we get
\begin{equation}\label{uo1z}
\left\Vert e^{-tA}u_{0}\right\Vert _{p} \leq C e^{-\rho_2 t} t^{-\frac{N}{2}(\frac{1}{N}-\frac{1}{p})}\left\Vert u_{0}\right\Vert _{N}\!.
\end{equation}
From assumptions on $p$ and $q$, we have $\frac{1}{N}+\frac{1}{q}\leq 1$, $\frac{1}{2}-\frac{N}{2p}>0$ and $\frac{1}{2}-\frac{N}{2}(\frac{1}{q}-\frac{1}{p})>0$; therefore, we can estimate
\begin{eqnarray}
&&\left\Vert \int_{0}^{t}e^{-(t-\tau)A} \mathbb{P} (u\cdot\nabla u+n \nabla\phi)(\tau)d\tau\right\Vert _{p}\nonumber \\
&&\ \ \ \leq C  \int_{0}^{t}e^{-\rho_2 (t-\tau)}(t-\tau)^{-\frac{N}{2p}-\frac{1}{2}}\left\Vert u(\tau)\right\Vert _{p}\left\Vert u(\tau)\right\Vert _{p}d\tau \nonumber \\
&&\ \ \ \ \ \ +C  \int_{0}^{t}e^{-\rho_2 (t-\tau)}(t-\tau)^{-\frac{N}{2}(\frac{1}{q}-\frac{1}{p})-\frac{1}{2}}\left\Vert n(\tau)\right\Vert _{q}\left\Vert \nabla\phi (\tau)\right\Vert _{N}d\tau \nonumber \\
&&\ \ \ \leq C e^{-\rho_2 t} t^{-\frac{N}{2}(\frac{1}{N}-\frac{1}{p})}\left(\sup_{0<\tau<\infty} \tau^{\frac{N}{2}(\frac{1}{N}-\frac{1}{p})} e^{\rho_2 \tau} \left\Vert u( \tau)\right\Vert _{p}\right)^{2}\times B\left(\frac{1}{2}-\frac{N}{2p},\frac{N}{p}\right) \nonumber \\
&&\ \ \ \ \ \ +C e^{-\min \{ \rho_2,\tilde{\varsigma} \} t} t^{-\frac{N}{2}(\frac{1}{N}-\frac{1}{p})} \left( \sup_{0<\tau<\infty} \left\Vert \nabla\phi ( \tau )\right\Vert _{N}\right) \left( \sup_{0<\tau<\infty}\tau^{\frac{N}{2}(\frac{2}{N}-\frac{1}{q})} e^{\tilde{\varsigma} \tau } \left\Vert n( \tau)\right\Vert _{q}\right)\nonumber\\
&&\ \ \ \ \ \ \times B\left(\frac{1}{2}-\frac{N}{2}(\frac{1}{q}-\frac{1}{p}),\frac{N}{2q}\right)\!.\label{uo2z}
\end{eqnarray}
Estimates (\ref{uo1z}) and (\ref{uo2z}) together yield

\begin{equation}\label{uo4z}
\left\|u\right\|_{\mathcal{Y}^{\tiny{\mbox{exp}}}_4} \leq C e^{-\min\{\rho_2,\tilde{\varsigma}\} t} \left( \left\|u_0\right\|_{\mathcal{X}_4}+\left\|u\right\|_{\mathcal{Y}^{\tiny{\mbox{exp}}}_4}\left\|u\right\|_{\mathcal{Y}^{\tiny{\mbox{exp}}}_4}+\left\|n\right\|_{\mathcal{Y}^{\tiny{\mbox{exp}}}_1} \right)\!.
\end{equation}

\noindent Hence, from (\ref{no4z}), (\ref{co4z}), (\ref{vo4z}) and (\ref{uo4z}), we obtain the estimates
\begin{eqnarray} \label{globalz}
&&\left\|n\right\|_{\mathcal{Y}^{\tiny{\mbox{exp}}}_1} \leq C \left( \left\|n_0\right\|_{\mathcal{X}_1}+\left\|n\right\|_{\mathcal{Y}^{\tiny{\mbox{exp}}}_1}\left\|u\right\|_{\mathcal{Y}^{\tiny{\mbox{exp}}}_4}+\left\|n\right\|_{\mathcal{Y}^{\tiny{\mbox{exp}}}_1}\left\|  c \right\|_{\mathcal{Y}^{\tiny{\mbox{exp}}}_2}+\left\|n\right\|_{\mathcal{Y}^{\tiny{\mbox{exp}}}_1}\left\| v \right\|_{\mathcal{Y}^{\tiny{\mbox{exp}}}_3} \right)\!, \nonumber \\
&&\left\|c\right\|_{\mathcal{Y}^{\tiny{\mbox{exp}}}_2} \leq C \left( \left\|c_0\right\|_{\mathcal{X}_2}+\left\|c\right\|_{\mathcal{Y}^{\tiny{\mbox{exp}}}_2}\left\|u\right\|_{\mathcal{Y}^{\tiny{\mbox{exp}}}_4}+\left\|n\right\|_{\mathcal{Y}^{\tiny{\mbox{exp}}}_1}\left\|c\right\|_{\mathcal{Y}^{\tiny{\mbox{exp}}}_2}+\left\|n\right\|_{\mathcal{Y}^{\tiny{\mbox{exp}}}_1} \right)\!, \nonumber \\
&&\left\|v\right\|_{\mathcal{Y}^{\tiny{\mbox{exp}}}_3} \leq C \left( \left\|v_0\right\|_{\mathcal{X}_3}+\left\|v\right\|_{\mathcal{Y}^{\tiny{\mbox{exp}}}_3}\left\|u\right\|_{\mathcal{Y}^{\tiny{\mbox{exp}}}_4}+\left\|n\right\|_{\mathcal{Y}^{\tiny{\mbox{exp}}}_1} \right)\!, \nonumber \\
&&\left\|u\right\|_{\mathcal{Y}^{\tiny{\mbox{exp}}}_4} \leq C \left( \left\|u_0\right\|_{\mathcal{X}_4}+\left\|u\right\|_{\mathcal{Y}^{\tiny{\mbox{exp}}}_4}\left\|u\right\|_{\mathcal{Y}^{\tiny{\mbox{exp}}}_4}+\left\|n\right\|_{\mathcal{Y}^{\tiny{\mbox{exp}}}_1} \right)\!.
\end{eqnarray}
The remaining of the proof follows as in Theorem \ref{theo1}.
\end{proof}
Let us observe that the conditions used to obtain (\ref{no2z}) to (\ref{uo2z}) can be summarized as follows:
\begin{equation}\label{ineqsys}
\left\{
\begin{array}{lc}
\frac{1}{p}+\frac{1}{q}\leq 1, \quad  \frac{1}{2}>\frac{N}{2p}, \quad \frac{N}{2}(\frac{1}{p}+\frac{1}{q})>\frac{1}{2}, & \text{for (\ref{no2z})}, \\[0.4cm]
\frac{1}{q}+\frac{1}{r}\leq 1, \quad \frac{1}{2}-\frac{N}{2r}>0, \quad \frac{N}{2}(\frac{1}{q}+\frac{1}{r}-\frac{1}{N})>0, & \text{for (\ref{no3z})},\\[0.4cm]
1-\frac{N}{2q}>0, \quad \frac{1}{2}-\frac{N}{2p}>0,  & \text{for (\ref{co1az})},\\[0.4cm]
\frac{1}{2}-\frac{N}{2p}>0, \quad  \frac{1}{2}-\frac{N}{2}(\frac{1}{q}-\frac{1}{r})>0, \quad
q \leq r,  & \text{for (\ref{co2z})},\\[0.4cm]
\frac{1}{N}+\frac{1}{q}\leq 1, \quad \frac{1}{2}-\frac{N}{2p}>0, \quad 1-\frac{N}{2}(\frac{1}{N}+\frac{1}{q}-\frac{1}{p})>0, &\text{for (\ref{uo2z})}.
\end{array}
\right.
\end{equation}
Thus, the exponents $p,$ $q$ and $r$ must satisfy one of the conditions established in Theorem \ref{theo2}.
\begin{remark}\label{rem2}
Conditions (\ref{ineqsys}) determine the corresponding ones in Theorem \ref{theo2}.
\begin{enumerate}
\item The condition $q \leq r$ is necessary to apply Lemma \ref{heat_kernel2z} and then obtain (\ref{co2z}). This condition together with $\frac{N}{2}(\frac{1}{q}+\frac{1}{r}-\frac{1}{N})>0$ imply that $q<2N$.
\item For (\ref{co1az}), we need $q>\frac{N}{2}$. Then, the range for $q$ is contained in $\frac{N}{2}<q<2N$ that allows the term $N-q$ to be negative, positive or zero. This leads us to the three alternatives in Theorem \ref{theo2}.
\item Notice that if $N = 2,$ the condition $\frac{1}{N}+\frac{1}{q}\leq 1$ implies that $q \geq 2.$ Thus, $q,$ $p$ and $r$ must be taken verifying $\textit{(}ii\textit{)}$ and $\textit{(}iii\textit{)}.$
\end{enumerate}
\end{remark}

\

\textbf{Acknowledgments:} LCFF was partially supported by CNPq and FAPESP, Brazil. The third author has been supported by Fondo Nacional de Financiamiento para la Ciencia, la Tecnología y la Innovación Francisco José de Caldas, contrato Colciencias FP 44842-157-2016.

\

\bibliographystyle{abbrv}

\bibliography{bibchemo_new}

\end{document}